\documentclass[a4paper, 12pt]{article}

\usepackage[english]{babel}
\usepackage[utf8]{inputenc}
\usepackage[T1]{fontenc}
\usepackage{amssymb,amsfonts,amsmath,amsthm}

\usepackage{graphicx}
\usepackage{multirow}
\usepackage{tikz}
\usetikzlibrary{arrows.meta, decorations.pathmorphing}

\usepackage[top= 3cm, bottom= 3cm, right= 3cm, left=3cm ]{geometry}
\usepackage{authblk}

\usetikzlibrary{arrows.meta}
\usepackage[]{enumerate}
\newcounter{pequation} % compteur spécial pour ce bloc

\usepackage{comment}
\usepackage{algorithm}
\usepackage{algorithm}
\usepackage{algpseudocode}

\newtheorem{theorem}{Theorem}

\newtheorem{lemma}{Lemma}
\newtheorem{remark}{Remark}
\newtheorem{corollary}{Corollary}

\newtheorem{proposition}{Proposition}

\numberwithin{equation}{section}
\numberwithin{definition}{section}
\numberwithin{theorem}{section}
\numberwithin{lemma}{section}
\numberwithin{remark}{section}
\numberwithin{corollary}{section}
\numberwithin{proposition}{section}
\newcommand{\R}{\mathbb{R}} %% reals
\usepackage{hyperref}

\begin{document}
\title{1D Convection-Diffusion in Porous Media: A Unified Solution from Bounded Domain to Half-Space}

%\author[1,2]{Koffi Ognandon AYENA}
%\author[2]{Frédéric HOLWECK}
%\author[3]{Amah Séna D'ALMEIDA}

%\affil[1]{Department of Mathematics, University of Lomé, Togo}
%\affil[2]{ICB/UTBM, UMR6303 CNRS, University of Technology of Belfort-Montbéliard, France}
%\affil[3]{Department of Mathematics, University of Lomé, Togo}

\author{
  Koffi Ognandon AYENA\thanks{Department of Mathematics, University of Lomé, Togo,\\ \href{mailto:kayena257@gmail.com}{kayena257@gmail.com}} ,
  %Frédéric HOLWECK\thanks{ICB/UTBM, UMR6303 CNRS, University of Technology of Belfort-Montbéliard, France} ,
  Amah Séna D'ALMEIDA\thanks{Department of Mathematics, University of Lomé, Togo}
}

\date{\today}
\maketitle

\begin{abstract}
	\noindent We present an analytical solution for 1D atmospheric pollutant dispersion in a bounded domain. Using coordinate transformation and spectral decomposition, we obtain a Fourier series solution for time-dependent wind velocity. For constant wind, Laplace inversion yields dual representations via residue calculus and Poisson summation. The solution extends to semi-infinite domains, recovering classical error-function profiles.
\end{abstract}

\noindent\textbf{Keywords:} Convection-diffusion equation, Green's function, Laplace transform, Porous media, Analytical solution
\section{Introduction}
Modelling the atmospheric dispersion of pollutants, such as radionuclides or fine particulate matter, is central to environmental and public-health risk assessment. This problem is traditionally described by a convection-diffusion equation, which combines transport by the wind (convection) with turbulent diffusion.

\begin{equation}
	\label{eq1}
	\frac{\partial}{\partial t} c(\overrightarrow{x}, t)+ \overrightarrow{\nabla}c(\overrightarrow{x},t)\cdot \overrightarrow{V}(\overrightarrow{x}, t)+c(\overrightarrow{x}, t) \overrightarrow{\nabla} \cdot\overrightarrow{V}(\overrightarrow{x}, t)  - \overrightarrow{\nabla} \cdot\Big(\mathbb{D}\cdot\overrightarrow{\nabla}c(\overrightarrow{x},t)\Big) = S(\overrightarrow{x}, t)
\end{equation}
where $\mathbb{D}$ is the diffusion tensor.\\
The analytical resolution of equation~\eqref{eq1}, in some specific cases, has been addressed by several authors using analytical or stochastic methods.

Historically, this equation has been studied extensively in the infinite-domain case ($\R^3$), where explicit analytical solutions can be obtained under simplifying assumptions. Among the classical approaches, the Gaussian plume solution allows one to solve the equation for a continuous point source in an infinite, homogeneous medium, producing Gaussian concentration distributions \cite{stockie2011mathematics}.
Other authors derive the steady-state solution using the method of characteristics \cite{bermudez1987methode}. The asymptotic nature of solutions to steady convection-diffusion problems has also been studied \cite{stynes2005steady}. Numerical methods for singular-perturbation problems, in particular convection-dominated steady convection-diffusion problems, have been widely discussed \cite{linss2003layer, augustin2011assessment}.
Zoppou and Knight (1999) proposed analytical solutions of the advection-diffusion equation with spatially varying coefficients in one to three dimensions, under the assumption that velocity varies proportionally to distance and diffusion scales with the square of this velocity \cite{zoppou1999analytical}. Separately, Szymczak and Ladd (2003) explored stochastic solutions, representing concentration through random-walk particles and proposing methods to impose reflecting, absorbing, or finite-reservoir boundary conditions \cite{szymczak2003boundary}. This approach reproduces boundary effects faithfully and offers a precise alternative to deterministic methods, particularly in complex geometries or near interfaces.

% NOTE TO THE AUTHOR: the literature review above stops in 2011. Adding a
% few references published after 2015 on analytical/numerical
% convection-diffusion would help position the contribution relative to
% the current state of the art.

However, for numerical, analytical or semi-analytical solution for concentration-driven flow in porous media, classical or frational approaches often treat bounded \cite{van1982analytical, deng2014integral, singh2015scale, fahs2014new} and unbounded~\cite{kunasegaran2025analytical} domains separately.%, requiring dedicated developments and obscuring the underlying unity of the solutions. 

We therefore present an approach that unifies the treatment of the longitudinal dispersion equation in a porous medium  whether posed on a bounded interval \( [0,L] \) or on the half-space \( [0, +\infty[ \).

This manuscript is organized as follows. In section~\ref{sec2}, we state the 
assumptions and formulate the problem. The derivation of the Laplace Transform solution and the Green's function via residue, with 
numerical illustration, is performed in section~\ref{sec3}. We present in section~\ref{sec4} a finite-volume validation and comparison with Genuchten's numerical solution. In section~\ref{sec5} a derivation of the Green's function via Poisson summation allows to establish and justify the $L\to+\infty$ limit recovering the half-space solution. The conclusion is given in section~\ref{sec_conclusion}.

%This manuscript is organized as follows.In Section~\ref{sec2}, we state the assumptions needed in the sequel and establish the equivalence between the spectral and spatial representations of the solution, showing explicitly how the limit \( L \to +\infty \) recovers the classical half-space solution in the constant-velocity case.

\section{Problem statement}
\label{sec2}
This section presents an analytical solution for the longitudinal dispersion equation in a bounded porous medium. The problem models the transport of a solute in a unidirectional flow with constant mean velocity.

\begin{equation}
	\frac{\partial}{\partial t} c(x, t)  +v_0\frac{\partial c(x, t)}{\partial x}
	- D_{11}\frac{\partial^2c(x, t)}{\partial x^2}   
	= 0%\rho_p\delta_{x_0},
	\label{eq:edp_dim1D}
\end{equation}
\begin{align}
	\label{eq:CIinf}
	c(x,0)&=c_a(x), \qquad \forall x \in ]0, L],\\
	\label{eq:CLinf}
	c(0,t)&=c_p(t) \qquad \forall t \in \R_+^*.%,\\
	%c(0,0)&=c_p^0, \\
	%c(L,0)&=c_a(L), 
\end{align}

\subsection{Assumptions}
\label{sec:hyp}
The results of the following sections rely on the assumptions below.

\begin{itemize}
	\item \label{H1} The physical coefficients are constant, with $D_{11}>0$ and $v_0\in\R$ (constant convection velocity).
	\item \label{H2} The initial condition is spatially uniform on the domain considered: $c_a(x)=c_a^0$ for all $x$, where $c_a^0\in\R$ is a given constant. (This assumption is used from Section~\ref{sec3} onward, where the Laplace transform of $c_a$ is simplified accordingly; the case of a non-constant $c_a(x)$ remains covered by the general solution~\eqref{sol_general}.)
	\item \label{H3} The boundary condition $c_p$ is of class $\mathcal{C}^1(\R_+)$.
\end{itemize}

\section{Solution via the Laplace transform}
\label{sec3}
The Laplace transform of a function $f(t)$ is defined by

\[
F(s) = \mathcal{L}\Big[f(t)\Big] = \int_{0}^{\infty} e^{-st} f(t) \, dt
\]

where $s$ is a complex variable.
Let $C(x,s)=\mathcal{L}\Big[c(x,t)\Big]$ denote the Laplace transform of $c(x,t)$.

Applying the operator $\mathcal{L}$ to equation~\eqref{eq:edp_dim1D} gives

\begin{align}
	\label{laplace}
	sC(x,s)+v_0\frac{\partial C(x, s)}{\partial x}-D_{11}\frac{\partial^2C(x, s)}{\partial x^2}=c_a(x)%\frac{\rho_p}{s}\delta_{x_0}+c_a(x)
\end{align}

Set $y(x)=C(x,s)$. A homogeneous solution of equation \eqref{laplace} is

\begin{equation}
	C_h(x,s)=K_1e^{\lambda_1x}+K_2e^{\lambda_2x},
\end{equation}
with
$$
\lambda_1=\frac{v_0+\sqrt{v_0^2+4sD_{11}}}{2D_{11}}, \qquad  \lambda_2=\frac{v_0-\sqrt{v_0^2+4sD_{11}}}{2D_{11}}, \qquad \Delta = v_0^2+4sD_{11}\ge 0
$$
for $s\geq 0$. 
A particular solution of \eqref{laplace} is
\begin{equation}
	\label{sol_particular}
	C_p(x,s)=k_1(x)e^{\lambda_1x}+k_2(x)e^{\lambda_2x}
\end{equation}
with coefficients $k_1$ and $k_2$ given by
$$
k_1(x)=\frac{1}{D_{11}(\lambda_2-\lambda_1)}\int \frac{c_a(x)}{e^{\lambda_1x}}dx\qquad k_2(x)=\frac{1}{D_{11}(\lambda_1-\lambda_2)}\int \frac{c_a(x)}{e^{\lambda_2x}}dx
$$
Every solution of \eqref{laplace} is therefore of the form

\begin{multline}
	\label{sol_general}
	C(x,s)=K_1e^{\lambda_1x}+K_2e^{\lambda_2x}+\\
	\frac{e^{\lambda_1x}}{D_{11}(\lambda_2-\lambda_1)}\int \frac{c_a(x)}{e^{\lambda_1x}}dx+\frac{e^{\lambda_2x}}{D_{11}(\lambda_1-\lambda_2)}\int \frac{c_a(x)}{e^{\lambda_2x}}dx
\end{multline}
with $\Delta \ge 0$.
When $c_a$ is constant, $c_a(x)=c_a^0$ (Assumption~\ref{H2}), and setting
$$
\lambda_1=\alpha +\beta,\qquad \lambda_2=\alpha -\beta 
, \quad 
\alpha =\frac{v_0}{2D_{11}}, \quad \beta=\sqrt{\frac{s}{D_{11}}-s_0},\qquad s_0=-\frac{v_0^2}{4D_{11}^2}
$$
and noting that
$$
\lambda_1\lambda_2=-\frac{s}{D_{11}},
$$
the solution $C(x,s)$ takes the form
\begin{align}
	\label{e10}
	C(x,s)
	&=K_1e^{\lambda_1x}+K_2e^{\lambda_2x}
	+\frac{c_a^0}{s}%+\frac{\rho_p}{sD_{11}(\lambda_1-\lambda_2)} \Bigg(e^{\lambda_2(x-x_0)}-e^{\lambda_1(x-x_0)}\Bigg)
\end{align}
The constants $K_i, i=1,2$ are determined explicitly using the boundary conditions
\begin{align}
	C(L,0)&=\frac{c_a(L)}{s}=\frac{c_a^0}{s}  ,\\
	\mathcal{L}[c(0,t)]&=C_p(s) \qquad \forall t \in \R_+^*,
\end{align}

Consequently, \eqref{e10} becomes
\begin{align}
	\label{sol_final}
	C(x,s)= \frac{e^{\lambda_2L}e^{\lambda_1x}-e^{\lambda_1L}e^{\lambda_2x}}{e^{\lambda_1L}-e^{\lambda_2L}}\left(\frac{c_a^0}{s}-C_p(s)\right)
	+\frac{c_a^0}{s}
\end{align}
The sought solution $c(x,t)$ is, by definition, the inverse Laplace transform of $C(x,s)$ given by~\eqref{sol_final}.
For this reason, we rewrite $C(x,s)$ as
\begin{align}
	C(x,s)&= G(x,s)\left(\frac{c_a^0}{s}-C_p(s)\right)
	+\frac{c_a^0}{s},\qquad G(x,s)=-e^{\alpha x}\frac{\sinh ((L-x)\beta)}{\sinh (L\beta)}
\end{align}
Applying $\mathcal{L}^{-1}$,
\begin{align*}
	c(x,t)=c_a^0+\mathcal{L}^{-1}\Big[G(x,s)\Big]*\mathcal{L}^{-1}\Big[\frac{c_a^0}{s}-C_p(s)\Big]
\end{align*}
and using the property of the convolution product, denoted $*$, the solution of equation~\eqref{eq:edp_dim1D} becomes
\begin{align*}
	c(x,t)=c_a^0+\int_{0}^{t}g(x,\tau)\left(c_a^0-c_p(t-\tau)\right)d\tau.
\end{align*}
\subsection{Inversion of $G$ by the residue method}
The inverse Laplace transform of $G$ is computed through a contour integral in the complex plane. The Bromwich-Mellin formula reads
\begin{equation}
	g(x,t)=\frac{1}{2i\pi}\int_{\gamma-i\infty}^{\gamma+i\infty}G(x,s)e^{st}ds
\end{equation}
We invert $G(x,s)$ using residues.
The singularities of $G(x,s)$ come from the zeros of its denominator:

\begin{align}
	\label{pole1}
	\sinh(\beta L)=0&\implies \beta L=in\pi\qquad(n\in \mathbb{Z}^*)\\
	\label{pole2}
	&\implies s_n = -\frac{v_0^2}{4D_{11}}-\frac{n^2\pi^2D_{11}}{L^2}
\end{align}
Since the poles $s_n$ are strictly negative, we choose $\gamma$ such that its real part $\mathfrak{R}(\gamma)\in \mathbb{R}^*_+$. Since $s_n=s_{-n}$, the residue theorem allows us to rewrite the inverse Laplace transform as

\begin{align}
	g(x,t)=\frac{1}{2}\sum_{n\in \mathbb{Z}^*}Res(G(x,s)e^{st},s_n).
\end{align}

Since the poles $s_n$ are of order $1$, the residue computation gives
\begin{align*}
	Res(G(x,s)e^{st},s_n)&=\lim_{s\to s_n}(s-s_n)G(x,s)e^{st}
\end{align*}
Since the denominator $\sinh (\beta L)$ of $G(x,s)$ vanishes at $s_n$, its first-order Taylor expansion near $s_n$ is
$$
\sinh (\beta L)\approx \sinh (\beta L)\Bigg|_{s=s_n}+(s-s_n)\frac{d}{ds}\sinh (\beta L)\Bigg|_{s=s_n}\approx  (s-s_n)\frac{d}{ds}\sinh (\beta L)\Bigg|_{s=s_n}
$$
so the residue becomes
\begin{align*}
	Res(G(x,s)e^{st},s_n)&=-e^{\alpha x+s_nt}\lim_{s\to s_n}(s-s_n)\frac{\sinh ((L-x)\beta)}{(s-s_n)\frac{d}{ds}\sinh (L\beta)}
\end{align*}
From
$$
\frac{d}{ds}\sinh (\beta z)=z\cosh(\beta z) \frac{d\beta}{ds}=\frac{z\cosh(\beta z)}{\sqrt{v_0^2+4sD_{11}}}, 
$$
replacing $z$ by $L$ and using implication~\eqref{pole1}, we obtain
$$
\frac{d}{ds}\sinh (\beta z)\Bigg|_{s=s_n}=\frac{(-1)^nL^2}{2in\pi D_{11}},
$$
with $\sqrt{v_0^2+4s_nD_{11}}=\frac{2in\pi D_{11}}{L}$.

Using the property $\sinh(ix)=i\sin x$ and
$$\beta(L-x)\Bigg|_{s=s_n}=\beta L(1-\frac{x}{L})\Bigg|_{s=s_n}=in\pi(1-\frac{x}{L}),$$
we simplify the residue expression:

\begin{align}
	\nonumber
	Res(G(x,s)e^{st},s_n)&=-\frac{i\sin (n\pi(1-\frac{x}{L}))}{\frac{(-1)^nL^2}{2in\pi D_{11}}}e^{\alpha x+s_nt}\\
	\nonumber
	&=-\frac{(-1)^n2n\pi D_{11}\sin (\frac{n\pi x}{L})}{(-1)^nL^2}e^{\alpha x+s_nt}\\
	&=-\frac{2n\pi D_{11}}{L^2}e^{\alpha x+s_nt}\sin (\frac{n\pi x}{L})
\end{align}
Using the symmetry $s_n=s_{-n}$ and $\sin (\frac{-n\pi x}{L})=-\sin (\frac{n\pi x}{L})$, we group the $+n$ and $-n$ terms to obtain
\begin{align*}
	g(x,t)&=\frac{1}{2}\sum_{n\in \mathbb{Z}^*}Res(G(x,s)e^{st},s_n)\\
	&=-\frac{1}{2}e^{\alpha x}\sum_{n\in \mathbb{Z}^*}\frac{2n\pi D_{11}}{L^2}e^{s_nt}\sin (\frac{n\pi x}{L})
\end{align*}
and consequently
\begin{align}
	\label{residu_g}
	g(x,t)=-e^{\alpha x}\sum_{n=1}^{\infty}\frac{2n\pi D_{11}}{L^2}e^{s_nt}\sin (\frac{n\pi x}{L})
\end{align}
We obtain the analytical expression of the general solution from the known function $g$:
\begin{align}
	\label{solution}
	c(x,t)
	&=c_a^0+\int_{0}^{t}g(x,\tau)\left(c_a^0-c_p(t-\tau)\right)d\tau
\end{align}
The integral
$$
\int_{0}^{t}g(x,\tau)\left(c_a^0-c_p(t-\tau)\right)d\tau
$$
is evaluated numerically below using a deterministic quadrature rule.

\begin{remark}[Recovery of the boundary condition at $x=0$]
	\label{rem:boundary_x0}
	For every $n\geq1$ and every $\tau>0$, $\sin(n\pi\cdot 0/L)=0$, so the series
	\eqref{residu_g} gives $g(0,\tau)=0$ for all $\tau>0$. Substituting this into
	\eqref{solution} would therefore yield $c(0,t)=c_a^0$ for every $t$ (figure~\ref{fig:concentration_1D}), in apparent
	contradiction with the boundary condition~\eqref{eq:CLinf}, $c(0,t)=c_p(t)$.
	
	This is not a contradiction: the Green's function $g(0,\cdot)$ is not a
	classical function on $[0,+\infty[$ but a distribution. Indeed, in the Laplace
	domain, $$G(0,s)=-\sinh(L\beta)/\sinh(L\beta)=-1$$
    for every $s$, independently
	of $D_{11}$ and $v_0$; hence
	$$
	g(0,\cdot)=\mathcal{L}^{-1}[-1]=-\delta_0
	$$
	in the sense of distributions, where $\delta_0$ is the Dirac mass at $\tau=0$.
	The series~\eqref{residu_g}, obtained from the poles of $G(x,s)$ alone,
	only reproduces the regular part of $g(0,\cdot)$ on $\tau>0$ -- correctly equal
	to $0$ -- but does not capture this singular contribution at $\tau=0$.
	Formally injecting $g(0,\tau)=-\delta_0(\tau)$ into~\eqref{solution} gives
	$$
	c(0,t)=c_a^0+\int_0^t\big(-\delta_0(\tau)\big)\big(c_a^0-c_p(t-\tau)\big)\,d\tau
	=c_a^0-\big(c_a^0-c_p(t)\big)=c_p(t),
	$$
	consistent with condition~\eqref{eq:CLinf}. Equivalently, the boundary value is recovered
	as the limit $\lim_{x\to0^+}c(x,t)=c_p(t)$ (a boundary layer at $x=0$), rather
	than by direct substitution of $x=0$ into the integral formula valid for $t>0$.
	
	Numerically, this means the convolution integral in equation~\eqref{solution} must not
	be evaluated at $x=0$: the value $c(0,t)=c_p(t)$ has to be imposed directly, as
	is done in the implementation (figure~\ref{fig:concentration_1D_ok}). By contrast, no such difficulty arises at $x=L$,
	since $\sin(n\pi)=0$ makes $g(L,\tau)=0$ classically (no Dirac contribution),
	consistently with $G(L,s)=0$ for every $s$, so $c(L,t)=c_a^0$ is obtained
	directly from~\eqref{solution} without special treatment.
\end{remark}
The physical parameters used are: diffusion coefficient $D_{11} = 0.1$, convection velocity $v_0 = 0.5$, uniform initial concentration $c_a^0 = 0.0$, and boundary condition at $x = 0$ given by
\[
c_p(t) = 
\begin{cases} 
	5\,t & \text{if } t < 0.1 \\
	0.5 & \text{if } t \geq 0.1
\end{cases}
\]
(a linear ramp followed by a plateau). The zero-flux condition at $x = L$ is implicitly satisfied by the construction of the Fourier series.

\textbf{Numerical method:} The results shown below are obtained via formula~\eqref{solution}
where $g(x,t)$ is the Green's function developed as a Fourier series (equation \eqref{residu_g}).
The convolution integral is evaluated numerically using the \texttt{quad} routine from \textsc{SciPy}.

\textbf{Stability and convergence:} The series is truncated at $N = 500$ terms, which gives good numerical stability. The choice of $N$ is guided by the following convergence criterion: the terms decay exponentially as $\exp(-n^2\pi^2 D_{11} t / L^2)$ for $t > 0$, so the series converges absolutely for every $t > 0$. At short times ($t \to 0^+$), convergence is slower, which justifies using a fairly large number of terms ($N = 500$) to capture the concentration front accurately.
A comparison with a finite-volume scheme showed results consistent with the analytical solution.
% NOTE TO THE AUTHOR: reporting a quantitative error here (e.g. an L2/L_inf
% error table as a function of h and Delta t) would strengthen this
% comparison; a qualitative statement alone is unlikely to be sufficient
% for a journal expecting quantitative numerical evidence.

Profiles are shown at times $t =0.02, 0.05, 0.1, 0.2$, $0.5$, $1.0$ and $2.0$. The plots show the convective transport of the front to the right, its diffusive broadening, and a gradual build-up of material near the wall $x = L$ associated with the zero-flux condition.

\begin{figure}[htb]
	\centering
	\includegraphics[width=\linewidth, height=7cm, keepaspectratio]{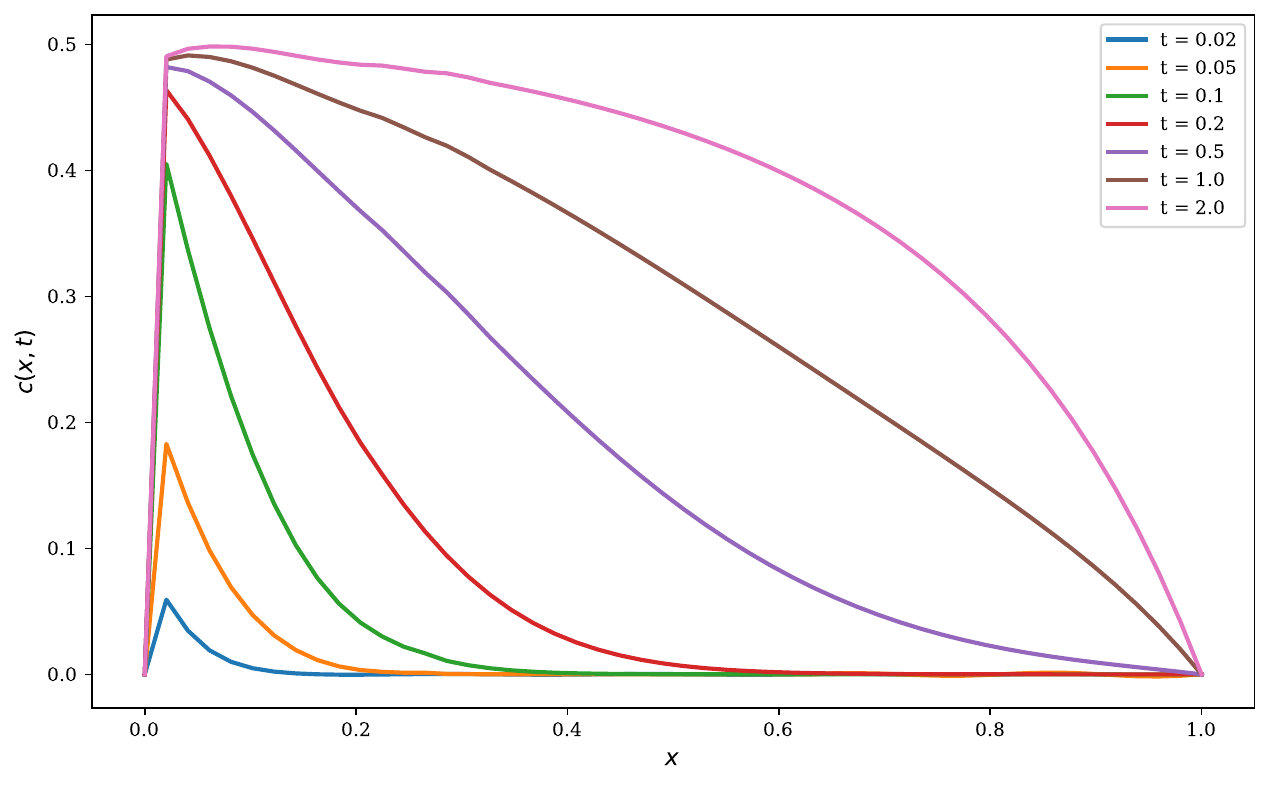}
\caption{
    Concentration profiles $c(x,t)$ obtained by direct substitution of the
	Fourier-series Green's function~\eqref{residu_g} into~\eqref{solution},
	without accounting for the Dirac contribution at $x=0$ identified in
	Remark~\ref{rem:boundary_x0}.}
	\label{fig:concentration_1D}
\end{figure}

\begin{figure}[htb]
	\centering
	\includegraphics[width=\linewidth, height=7cm, keepaspectratio]{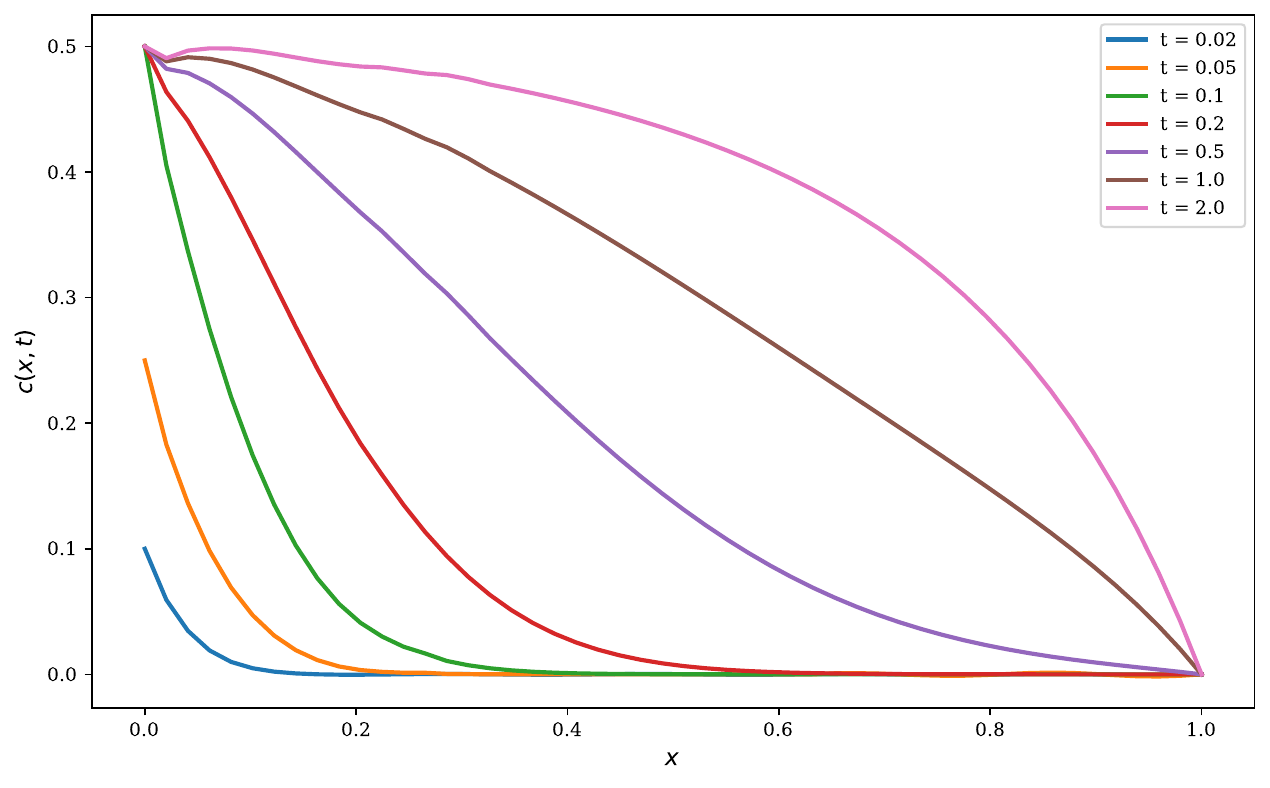}
\caption{
	Concentration profiles $c(x,t)$ obtained from the same analytical
	solution~\eqref{residu_g}--\eqref{solution}, this time accounting for the
	Dirac mass $g(0,\cdot)=-\delta_0$ identified in
	Remark~\ref{rem:boundary_x0}: the boundary value $c(0,t)=c_p(t)$ is
	imposed directly rather than obtained from the series at $\tau>0$ alone.
	The profile is now continuous at $x=0$ and satisfies the prescribed
	boundary condition~\eqref{eq:CLinf}.}
    
    %Concentration profiles $c(x,t)$ on the spatial domain $x \in [0, L]$ with $L = 1.0$, obtained from the analytical Fourier-series solution. Here the Dirichlet condition $c(0,t)=c_p(t)$ is enforced directly through the boundary flux~\eqref{eq:flux_gauche} in the discretization, with no distributional correction needed.}
	\label{fig:concentration_1D_ok}
\end{figure}

\section{Numerical results}
\label{sec4}
\subsection{Finite-volume discretization}
\label{sub:vf}

We return to the 1D convection-diffusion problem~\eqref{eq:edp_dim1D}, written in conservative form:
\[
\partial_t c + \partial_x F(c) = 0, \qquad
F(x,t) := v_0\,c(x,t) - D_{11}\,\partial_x c(x,t),
\]
where $F$ is the total flux (advective + diffusive). This form will be referred to as $\eqref{eq:edp}$ below.
\begin{equation}
	\label{eq:edp}
	\partial_t c + \partial_x F(c) = 0.
\end{equation}

\subsubsection{Semi-discretization in space}

We introduce a uniform subdivision of $[0,L]$ into $N$ control
volumes
\[
\Omega_i = \left[x_{i-\frac12},\, x_{i+\frac12}\right], \qquad
x_{i+\frac12} = ih, \qquad h = \frac{L}{N}, \qquad i=1,\dots,N,
\]
with centers $x_i = \left(i-\tfrac12\right)h$. We denote by $c_i(t)$ the approximation
of the average value of $c(\cdot,t)$ over $\Omega_i$.

Integrating the relation~\eqref{eq:edp} over
$\Omega_i$ and dividing by $h$ gives the semi-discrete scheme
(method of lines)
\begin{equation}
	\label{eq:semi_discret}
	\frac{dc_i}{dt} = -\frac{1}{h}\Big(F_{i+\frac12} - F_{i-\frac12}\Big) .
\end{equation}

It remains to specify the discretization of the flux $F$ at each interface. For internal fluxes, we use an upwind scheme ($v_0>0$) for the advective term and a centred scheme for the diffusive term:
\begin{equation}
	\label{eq:flux_interne}
	F_{i+\frac12} = v_0 c_i - D_{11}\frac{c_{i+1}-c_i}{h}, \qquad i=1,\dots,N-1.
\end{equation}
At the left boundary ($x=0$), the Dirichlet condition \eqref{eq:CLinf} imposes $c(0,t)=c_p(t)$; approximating the derivative with a finite difference over the half-step separating $x=0$ from the centre $x_1$ of the first cell gives
\begin{equation}
	\label{eq:flux_gauche}
	F_{\frac12} = v_0 c_p(t) - D_{11}\frac{c_1 - c_p(t)}{h/2}.
\end{equation}
At the right boundary ($x=L$), the condition $c(L,0)=c_a^0$ is treated analogously:
\begin{equation}
	\label{eq:flux_droit}
	F_{N+\frac12} = v_0 c_a^0 - D_{11}\frac{c_a^0 - c_N}{h/2}.
\end{equation}

We detail here, step by step, the construction of the full matrix form of the semi-discretized system.

\subsection*{Semi-discrete equation for each volume}

Equation \eqref{eq:semi_discret} reads, for each $i=1,\dots,N$:
\begin{equation}
	\frac{dc_i}{dt} = -\frac{1}{h}\left(F_{i+\frac12} - F_{i-\frac12}\right).
\end{equation}

\subsection*{Expression of the fluxes in terms of the unknowns}

\subsubsection*{Internal fluxes ($i=1,\dots,N-1$)}

From \eqref{eq:flux_interne}:
\begin{equation}
	F_{i+\frac12} = v_0 c_i - D_{11}\frac{c_{i+1}-c_i}{h}
	= \left(v_0 + \frac{D_{11}}{h}\right)c_i - \frac{D_{11}}{h} c_{i+1}.
\end{equation}

\subsubsection*{Flux at the left boundary ($F_{\frac12}$)}

From \eqref{eq:flux_gauche}:
\begin{equation}
	F_{\frac12} = v_0 c_p(t) - D_{11}\frac{c_1 - c_p(t)}{h/2}
	= v_0 c_p(t) - \frac{2D_{11}}{h}(c_1 - c_p(t)).
\end{equation}

Expanding:
\begin{equation}
	F_{\frac12} = -\frac{2D_{11}}{h} c_1 + \left(v_0 + \frac{2D_{11}}{h}\right) c_p(t).
\end{equation}

\subsubsection*{Flux at the right boundary ($F_{N+\frac12}$)}

From \eqref{eq:flux_droit}:
\begin{equation}
	F_{N+\frac12} = v_0 c_a^0 - D_{11}\frac{c_a^0 - c_N}{h/2}
	= v_0 c_a^0 - \frac{2D_{11}}{h}(c_a^0 - c_N).
\end{equation}

Expanding:
\begin{equation}
	F_{N+\frac12} = \frac{2D_{11}}{h} c_N + \left(v_0 - \frac{2D_{11}}{h}\right) c_a^0.
\end{equation}

\subsection*{Equation for each index $i$}

\subsubsection*{Case $i=1$ (first cell)}

For $i=1$, the equation involves $F_{\frac12}$ (left boundary) and $F_{\frac32}$ (internal flux between cells 1 and 2):
\begin{equation}
	\frac{dc_1}{dt} = -\frac{1}{h}\left(F_{\frac32} - F_{\frac12}\right) .
\end{equation}

The equation for $i=1$ becomes
\begin{equation}
	\frac{dc_1}{dt} = -\left(\frac{v_0}{h} + \frac{3D_{11}}{h^2}\right)c_1 + \frac{D_{11}}{h^2} c_2 + \left(\frac{v_0}{h} + \frac{2D_{11}}{h^2}\right)c_p(t) + b_1.
\end{equation}

\subsubsection*{Case $2 \le i \le N-1$ (internal cells)}

The equation for internal $i$ becomes
\begin{equation}
	\frac{dc_i}{dt} = \left(\frac{v_0}{h} + \frac{D_{11}}{h^2}\right)c_{i-1} - \left(\frac{v_0}{h} + \frac{2D_{11}}{h^2}\right)c_i + \frac{D_{11}}{h^2} c_{i+1} .
\end{equation}

\subsubsection*{3.3 Case $i=N$ (last cell)}

For $i=N$, the equation involves $F_{N-\frac12}$ (internal flux between $N-1$ and $N$) and $F_{N+\frac12}$ (right boundary):

The equation for $i=N$ becomes
\begin{equation}
	\frac{dc_N}{dt} = \left(\frac{v_0}{h} + \frac{D_{11}}{h^2}\right)c_{N-1} - \frac{3D_{11}}{h^2} c_N - \left(\frac{v_0}{h} - \frac{2D_{11}}{h^2}\right) c_a^0 + b_N.
\end{equation}

The system can now be written as
\begin{equation}
	\frac{d\mathbf{c}}{dt} = M\mathbf{c} + c_p(t)\mathbf{g} + \mathbf{s},
\end{equation}
where $\mathbf{c} = (c_1,\dots,c_N)^\top$.

Reading off the coefficients $c_j$ in each equation gives a tridiagonal matrix:

\begin{itemize}
	\item \textbf{Row 1}:
	\begin{equation}
		M_{1,1} = -\left(\frac{v_0}{h} + \frac{3D_{11}}{h^2}\right), \qquad M_{1,2} = \frac{D_{11}}{h^2}.
	\end{equation}
	
	\item \textbf{Rows $i=2,\dots,N-1$}:
	\begin{equation}
		M_{i,i-1} = \frac{v_0}{h} + \frac{D_{11}}{h^2}, \qquad
		M_{i,i} = -\left(\frac{v_0}{h} + \frac{2D_{11}}{h^2}\right), \qquad
		M_{i,i+1} = \frac{D_{11}}{h^2}.
	\end{equation}
	
	\item \textbf{Row $N$}:
	\begin{equation}
		M_{N,N-1} = \frac{v_0}{h} + \frac{D_{11}}{h^2}, \qquad
		M_{N,N} = -\frac{3D_{11}}{h^2}.
	\end{equation}
\end{itemize}

All other coefficients are zero.

Only the first row contains a term in $c_p(t)$:
\begin{equation}
	g_1 = \frac{v_0}{h} + \frac{2D_{11}}{h^2}, \qquad g_i = 0 \quad \text{for } i\ge 2.
\end{equation}

Hence
\begin{equation}
	\mathbf{g} = \left(\frac{v_0}{h} + \frac{2D_{11}}{h^2},\, 0,\, \dots,\, 0\right)^\top.
\end{equation}

The vector $\mathbf{s}$ contains two types of contributions:
The right Dirichlet condition, which appears only in the last row as
	\begin{equation}
		-\left(\frac{v_0}{h} - \frac{2D_{11}}{h^2}\right) c_a^0.
	\end{equation}

Hence
\begin{equation}
	s_i = 0%\frac{\rho_p}{h}\,\mathbf{1}_{\{i=i_0\}}
    \quad \text{for } i=1,\dots,N-1,
\end{equation}
and
\begin{equation}
	s_N = - \left(\frac{v_0}{h} - \frac{2D_{11}}{h^2}\right) c_a^0.
\end{equation}
In summary, the semi-discretized system reads
\begin{equation}
	\boxed{
		\frac{d\mathbf{c}}{dt} = M\mathbf{c} + c_p(t)\mathbf{g} + \mathbf{s}
	}
\end{equation}

with

\begin{equation}
	M = 
	\begin{pmatrix}
		-\left(\frac{v_0}{h} + \frac{3D_{11}}{h^2}\right) & \frac{D_{11}}{h^2} & 0 & \cdots & 0 \\
		\frac{v_0}{h} + \frac{D_{11}}{h^2} & -\left(\frac{v_0}{h} + \frac{2D_{11}}{h^2}\right) & \frac{D_{11}}{h^2} & \ddots & \vdots \\
		0 & \ddots & \ddots & \ddots & 0 \\
		\vdots & \ddots & \frac{v_0}{h} + \frac{D_{11}}{h^2} & -\left(\frac{v_0}{h} + \frac{2D_{11}}{h^2}\right) & \frac{D_{11}}{h^2} \\
		0 & \cdots & 0 & \frac{v_0}{h} + \frac{D_{11}}{h^2} & -\frac{3D_{11}}{h^2}
	\end{pmatrix},
\end{equation}

\begin{equation}
	\mathbf{g} = 
	\begin{pmatrix}
		\frac{v_0}{h} + \frac{2D_{11}}{h^2} \\
		0 \\
		\vdots \\
		0
	\end{pmatrix},
\end{equation}
and
\begin{equation}
	\mathbf{s} = 
	\begin{pmatrix}
		0\\
		0\\
		\vdots \\
		0 \\
		 - \left(\frac{v_0}{h} - \frac{2D_{11}}{h^2}\right) c_a^0
	\end{pmatrix}.
\end{equation}

Applying the implicit Euler scheme gives
\begin{equation}
	\frac{\mathbf{c}^{n+1}-\mathbf{c}^n}{\Delta t} = M\mathbf{c}^{n+1} + c_p(t^{n+1})\mathbf{g} + \mathbf{s},
\end{equation}
that is,
\begin{equation}
\label{sol_FV}
		(I_N - \Delta t M)\mathbf{c}^{n+1} = \mathbf{c}^n + \Delta t\left(c_p(t^{n+1})\mathbf{g} + \mathbf{s}\right).
\end{equation}

The matrix $A = I_N - \Delta t M$ is tridiagonal and constant, allowing a single $LU$ factorization and an efficient $O(N)$ solve at each time step (figure~\ref{fig:concentration_1D_VF}).

\begin{figure}[h!]
	\centering
	\includegraphics[width=\linewidth, height=7cm, keepaspectratio]{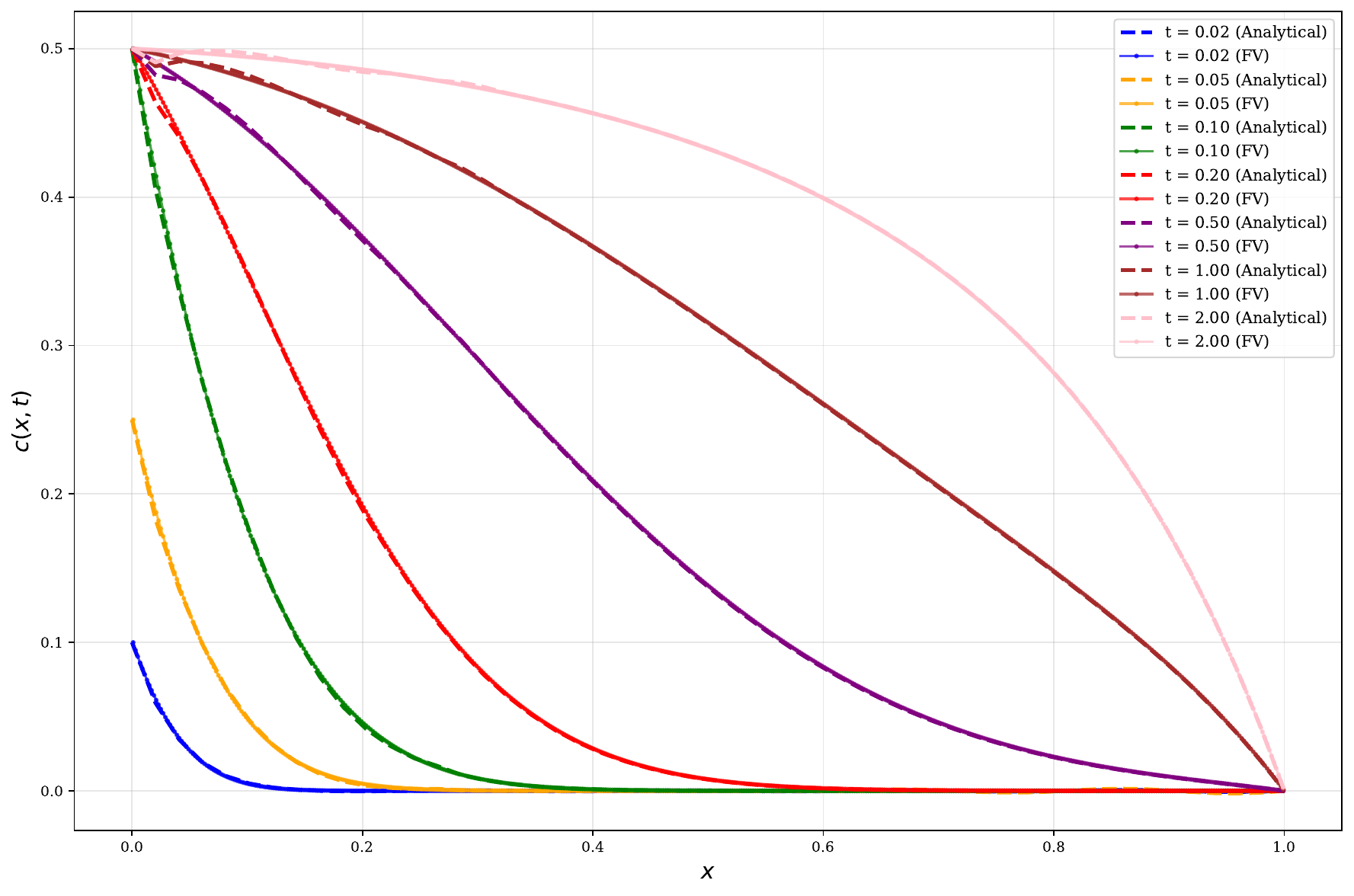}
    %comparison_solvers.png
    %profil_concentration_FV
	\caption{
		Concentration profiles $c(x,t)$ on the spatial domain $x \in [0, L]$ with $L = 1.0$, obtained with the finite-volume scheme.}
	\label{fig:concentration_1D_VF}
\end{figure}

\subsection{Comparison with the analytical solution of Genuchten}
\label{solGenuchten}
We compare our results with the analytical solution of Genuchten in~\cite{van1982analytical}, taking
$c_{a0} = 0$ (zero initial concentration), $L = 1.0$~km, $D_{11} = 0.1$~km$^2$/year,
$v_0 = 0.5$~km/year, and an inlet concentration $C_0 = 0.5$.
We observe very good agreement between the two solutions for $x$ below a certain
threshold, regardless of the time considered (figure~\ref{fig:concentration_1D_Genuchten}). However, as $t$ increases, a
discrepancy appears and progressively widens as we approach the right boundary
$x = L$.
This discrepancy is not due to numerical error, but rather to a different choice
of boundary condition at the outlet:

\begin{itemize}
    \item Authors impose in~\cite{van1982analytical} a homogeneous Neumann condition
    \[
        \left.\frac{\partial c}{\partial x}\right|_{x=L} = 0,
    \]
    representing a closed boundary with no outgoing dispersive flux. Under
    steady-state conditions, this forces the concentration to saturate over the
    entire domain, including at $x = L$, which explains the progressive
    accumulation observed in their profiles at large times.

    \item Our solution, built on an eigenbasis of $\sin(n\pi x/L)$,
    implicitly imposes a Dirichlet condition at $x = L$,
    \[
        c(L,t) = c_{a0},
    \]
    so the concentration remains anchored at $c_{a0}$. The resulting
    steady-state profile is therefore bounded between $c_p(t)$ at $x = 0$ and
    $c_{a}^0$ at $x = L$, and never fully saturates.
\end{itemize}

This difference in the treatment of the right boundary explains the growing
discrepancy observed in the plots at large times: it reflects a \emph{physical
difference in the modeling of the outlet condition}, rather than two solutions
of strictly the same problem. A rigorous comparison would require harmonizing
this condition -- either by imposing a zero-flux condition in our formulation;
or by introducing an explicit sink in Genuchten's solution.

\begin{figure}[h!]
	\centering
	\includegraphics[width=\linewidth, height=11cm, keepaspectratio]{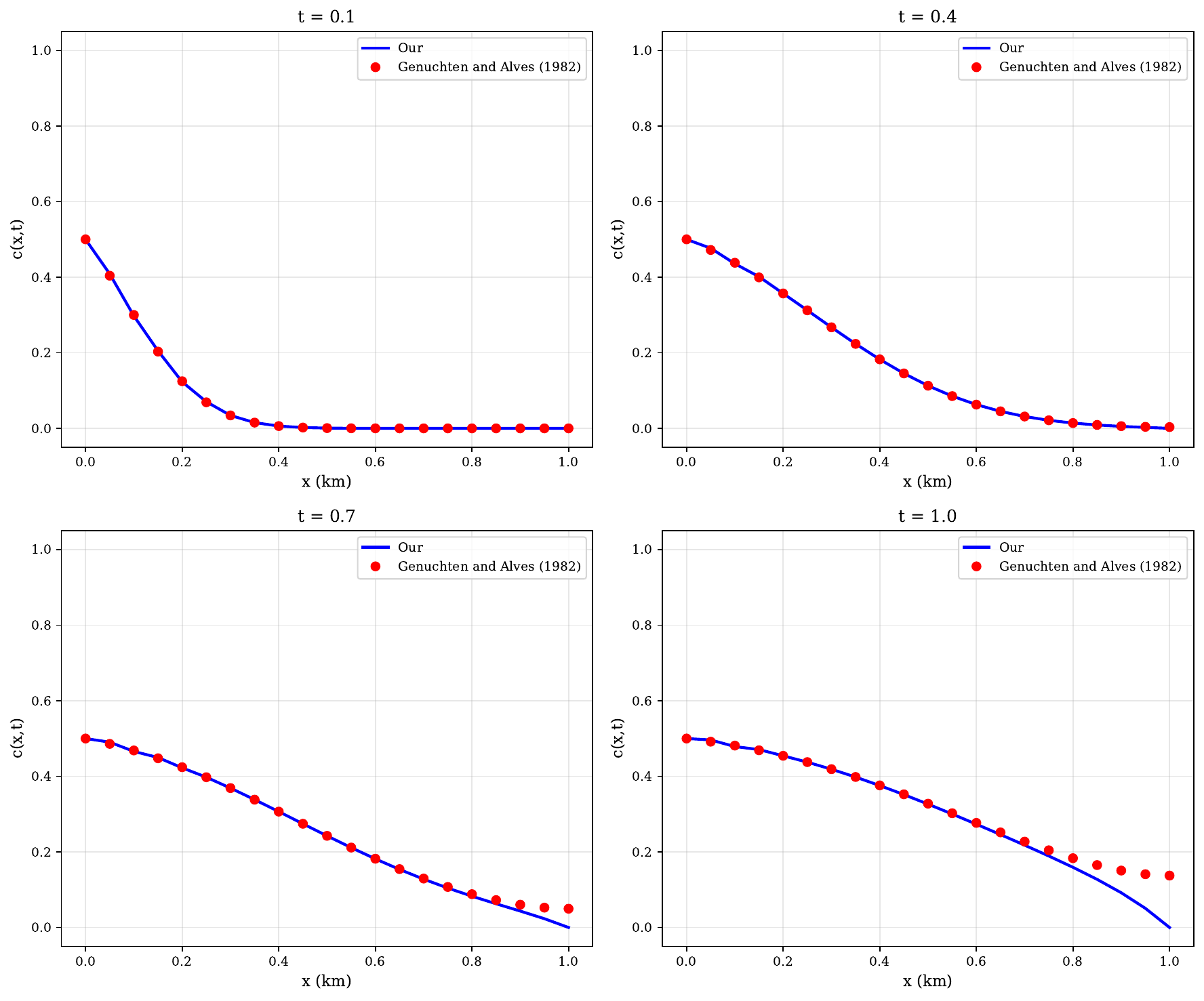}
    %comparison_solvers.png
    %profil_concentration_FV
	\caption{
		Concentration profiles $c(x,t)$ on the spatial domain $x \in [0, L]$ with $L = 1.0$, obtained with the analytical solution of Genuchten.}
	\label{fig:concentration_1D_Genuchten}
\end{figure}

Since the goal is to derive the analytical solution by extending the bounded domain $[0,L]$ to $[0,\infty[$, an equivalent form of the solution is needed. 

\section{Analytical solution in the half-space}
\label{sec5}
In this first part, we recover the same result \eqref{residu_g} by expanding $G(x,s)$ in series.

\subsection{Inversion of $G$ by the Poisson summation formula}
\begin{align*}
	G(x,s)&=-e^{\alpha x-\beta L}\frac{e^{(L-x)\beta}-e^{-(L-x)\beta}}{1-e^{-2\beta L}}\\
	&=-e^{\alpha x-\beta L}(e^{(L-x)\beta}-e^{-(L-x)\beta})\sum_{n=0}^{\infty} e^{-2n\beta L}, \qquad \text{since}\ |e^{-2\beta L}|< 1\\
	&=-e^{\alpha x}\sum_{n=0}^{\infty} e^{-\beta(x+2n L)}+e^{\alpha x}\sum_{n=0}^{\infty} e^{-\beta((2n+1) L-x)}
\end{align*}
We obtain the solution $c(x,t)$ by applying the inverse Laplace transform $\mathcal{L}^{-1}$ in reference~\cite{abramowitz1948handbook}:
\begin{align*}
	\mathcal{L}^{-1}[e^{-a\sqrt{s}}]= \frac{a}{2\sqrt{\pi t^3}}e^{-\frac{a^2}{4t}}, \qquad(a > 0) 
\end{align*}
and knowing by definition that
$$
\mathcal{L}^{-1}\Big[F(s-s_0)\Big]=e^{s_0t}\mathcal{L}^{-1}\Big[F(s)\Big], 
$$

we get
\begin{align}
	\mathcal{L}^{-1}[e^{-a\sqrt{\frac{s}{b}-s_0}}]= \frac{ae^{bs_0t}}{2\sqrt{\pi bt^3}}e^{-\frac{a^2}{4bt}}, \qquad \qquad(a > 0)
\end{align}
Applying this last relation to compute $g(x,t)$:
\begin{align*}
	g(x,t)&=\mathcal{L}^{-1}\Big[G(x,s)\Big]\\
	&=-e^{\alpha x}\sum_{n=0}^{\infty} \mathcal{L}^{-1}\Big[e^{-\beta(x+2n L)}\Big]+e^{\alpha x}\sum_{n=0}^{\infty} \mathcal{L}^{-1}\Big[e^{-\beta((2n+1) L-x)}\Big]\\
	&=-e^{\alpha x}\sum_{n=0}^{\infty} \frac{(x+2n L)e^{D_{11}s_0t}}{2\sqrt{\pi D_{11}t^3}}e^{-\frac{(x+2n L)^2}{4D_{11}t}}+e^{\alpha x}\sum_{n=0}^{\infty} \frac{((2n+1)L-x)e^{D_{11}s_0t}}{2\sqrt{\pi D_{11}t^3}}e^{-\frac{((2n+1) L-x)^2}{4D_{11}t}}\\
	&=-\frac{e^{\alpha x+D_{11}s_0t}}{2\sqrt{\pi D_{11}t^3}}\sum_{n=0}^{\infty} \Bigg[(x+2n L)e^{-\frac{(x+2n L)^2}{4D_{11}t}}-((2n+1)L-x) e^{-\frac{((2n+1) L-x)^2}{4D_{11}t}}\Bigg]
\end{align*}
We can make the second sum more explicit:
\begin{align*}
	\sum_{n=0}^{\infty} ((2n+1)L-x) e^{-\frac{((2n+1) L-x)^2}{4D_{11}t}}&=\sum_{n=0}^{\infty}q(2(n+1)L-x), \qquad q(y)=ye^{-\frac{y^2}{4D_{11}t}}\\
	&=\sum_{n=1}^{\infty}q(2nL-x)\\
	&=-\sum_{n=1}^{\infty}q(x-2nL), \qquad \text{since}\ \ q\  \text{is odd}\\
	&=-\sum^{-1}_{n=-\infty}q(x+2nL)\\
	&=-\sum^{-1}_{n=-\infty}(x+2nL)e^{-\frac{(x+2nL)^2}{4D_{11}t}}
\end{align*}
Thus $g(x,t)$ takes the form
\begin{align}
	\nonumber
	g(x,t)&=-\frac{e^{\alpha x+D_{11}s_0t}}{2\sqrt{\pi D_{11}t^3}}\Bigg[\sum_{n=0}^{\infty} (x+2n L)e^{-\frac{(x+2n L)^2}{4D_{11}t}}+\sum^{-1}_{n=-\infty}(x+2nL)e^{-\frac{(x+2nL)^2}{4D_{11}t}}\Bigg]\\
	\nonumber
	\qquad
	\\
	\label{forme_g}
	g(x,t)&=-\frac{e^{\alpha x+D_{11}s_0t}}{2\sqrt{\pi D_{11}t^3}}\sum_{n\in \mathbb{Z}} (x+2n L)e^{-\frac{(x+2n L)^2}{4D_{11}t}}
\end{align}
Recall the following theorem \cite{katznelson2004introduction}.
\begin{theorem}[Poisson summation formula]
	\label{theo_somm}
	Let $a$ be a strictly positive real number and $\omega_{0} = \frac{2\pi}{a}$.
	If $f$ is a continuous function from $\mathbb{R}$ to $\mathbb{C}$, integrable, and such that
	$$
	\exists C > 0, \ \exists \alpha > 1, \ \forall x \in \mathbb{R}, \quad 
	|f(x)| \leq \frac{C}{\left(1 + |x|\right)^{\alpha}}
	$$
	and
	$$
	\sum_{m=-\infty}^{\infty} \left| \widehat{f}(m\omega_{0}) \right| < \infty,
	$$
	then
	$$
	\sum_{n=-\infty}^{\infty} f(t + na) 
	= \frac{1}{a} \sum_{m=-\infty}^{\infty} \widehat{f}(m\omega_{0}) \, e^{\, i m \omega_{0} t}.
	$$
\end{theorem}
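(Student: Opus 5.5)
We prove Theorem~\ref{theo_somm} by periodization. Define $F(t)=\sum_{n\in\mathbb{Z}} f(t+na)$. The first step is to show that this series converges normally on every compact set, so that $F$ is continuous and $a$-periodic. The decay hypothesis gives, for $|t|\le R$ and $|n|a\ge 2R$, the bound $|f(t+na)|\le C(1+|n|a/2)^{-\alpha}$. Since $\alpha>1$, this is summable in $n$, and the Weierstrass M-test applies. The same estimate, combined with the integrability of $f$, justifies the manipulations with integrals in the next step.

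Next we compute the Fourier coefficients of $F$ on $[0,a]$, using the convention $\widehat f(\omega)=\int_{\mathbb{R}} f(x)e^{-i\omega x}\,dx$. Normal convergence on $[0,a]$ lets us interchange sum and integral:
\[
c_m(F)=\frac1a\int_0^a F(t)e^{-im\omega_0 t}\,dt=\frac1a\sum_{n\in\mathbb{Z}}\int_{na}^{(n+1)a} f(u)e^{-im\omega_0 (u-na)}\,du .
\]
Because $e^{im\omega_0 na}=e^{2i\pi mn}=1$, the integrals over the intervals $[na,(n+1)a]$ glue together into a single integral over $\mathbb{R}$. This gives $c_m(F)=\frac1a\widehat f(m\omega_0)$.

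The main point is that the Fourier series of $F$ actually converges to $F$ pointwise. Here the second hypothesis enters. Since $\sum_m|\widehat f(m\omega_0)|<\infty$, the series
\[
H(t)=\frac1a\sum_m \widehat f(m\omega_0)e^{im\omega_0 t}
\]
converges uniformly, so $H$ is continuous and $a$-periodic. By uniform convergence, its Fourier coefficients are exactly $\frac1a\widehat f(m\omega_0)=c_m(F)$. Hence $F-H$ is a continuous periodic function all of whose Fourier coefficients vanish. By the uniqueness theorem (via Fejér's theorem: the Cesàro means of $F-H$ are identically zero and converge uniformly to $F-H$), we get $F-H\equiv 0$. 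This yields the stated identity for every $t$, which is the formula of Theorem~\ref{theo_somm}.

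Two steps need care: the uniform control of the tails of the periodization, which is needed for continuity of $F$ and for the sum–integral exchange, and the appeal to Fejér's theorem for uniqueness. Both are standard, with the weight $(1+|x|)^{-\alpha}$ and $\alpha>1$ being exactly what makes the tail control work. In the application to \eqref{forme_g}, one takes $a=2L$ and $f(y)=y\,e^{-y^2/(4D_{11}t)}$. This $f$ is a Schwartz function, so both hypotheses hold trivially, and its Fourier transform is an explicit Gaussian times $\omega$. This recovers the sine series \eqref{residu_g}.
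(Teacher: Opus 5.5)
Your proof is correct. You periodize $f$, use the decay bound with $\alpha>1$ to get normal convergence on $[0,a]$, identify the Fourier coefficients of $F$ as $\frac1a\widehat f(m\omega_0)$, and conclude from $\sum_m|\widehat f(m\omega_0)|<\infty$ together with Fej\'er's uniqueness theorem; this is the standard proof. The paper gives no proof of Theorem~\ref{theo_somm} of its own and only quotes it from Katznelson, so your argument supplies the standard proof of the cited result.
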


The function $q(y)=ye^{-\frac{y^2}{4D_{11}t}}$ satisfies all the hypotheses of Theorem~\eqref{theo_somm}, so
\begin{align}
	\nonumber
	\sum_{n\in \mathbb{Z}} q(x+2n L)&=\frac{1}{2L}\sum_{n\in \mathbb{Z}} \hat{q}(\frac{n\pi}{L})e^{-\frac{i n\pi}{L}t},\qquad q(y)=ye^{-\frac{y^2}{4D_{11}t}}    \\
	\nonumber
	&=-\frac{(2\pi D_{11}t)\sqrt{4\pi D_{11}t}}{2L^2}\sum_{m\in \mathbb{Z}} im e^{-D_{11}t\frac{m^2\pi^2}{L^2}}e^{\frac{i m\pi}{L}x}\\
	\nonumber
	&=-\frac{(2\pi D_{11}t)\sqrt{4\pi D_{11}t}}{2L^2}\sum_{m=1}^{\infty} im e^{-D_{11}t\frac{m^2\pi^2}{L^2}}\left(e^{-\frac{i m\pi}{L}x}-e^{\frac{i m\pi}{L}x}\right)
\end{align}
It follows that
\begin{align}
	\label{serie_q}
	\sum_{n\in \mathbb{Z}} q(x+2n L)&=\frac{4(\pi D_{11}t)^{\frac{3}{2}}}{L^2}\sum_{m=1}^{\infty} m e^{-D_{11}t\frac{m^2\pi^2}{L^2}}\sin \frac{m\pi x}{L}
\end{align}
Substituting relation \eqref{serie_q} into \eqref{forme_g},

\begin{align}
\label{serie_g}
	g(x,t)
    =-\frac{e^{\alpha x+D_{11}s_0t}}{2\sqrt{\pi D_{11}t^3}}\sum_{n\in \mathbb{Z}} q(x+2n L),\qquad q(y)=ye^{-\frac{y^2}{4D_{11}t}}
\end{align}
we obtain, equivalently to the residue method \eqref{residu_g}, the same expression for $g(x,t)$:

\begin{align}
	\label{serie_poisson_g}
	g(x,t)=-e^{\alpha x}\sum_{m=1}^{\infty}\frac{2m\pi D_{11}}{L^2}e^{s_mt}\sin (\frac{m\pi x}{L})
\end{align}

	The series \eqref{serie_g} and \eqref{serie_poisson_g} describing the solution \( c(x,t) \) describe the same convection-diffusion physics. This reflects two complementary mathematical viewpoints: complex analysis (the residue method leading to \eqref{serie_g}) and Fourier analysis (Poisson summation leading to \eqref{serie_poisson_g}). Their agreement, despite the different derivations, is consistent with the correctness of the computation. The residue method makes the spectral structure explicit through the poles of the Green's function, while the Poisson summation gives a series form that is convenient for computation. Together, the two representations offer complementary theoretical and numerical perspectives on flow in porous media.

Under the additional assumption $c_p(t)=c_p^0$,
\begin{align}
	c(x,t)
	&=c_a^0+\left(c_a^0-c_p^0\right)\int_{0}^{t}g(x,\tau)d\tau
\end{align}

In this second part, we use the analytical solution modelling pollutant convection on the domain $[0,L]$ to obtain an extension to the half-space $[0,+\infty[$.
From the expression of $g$,
\begin{align*}
	g(x,t)&=-\frac{e^{\alpha x+D_{11}s_0t}}{2\sqrt{\pi D_{11}t^3}}\sum_{n\in \mathbb{Z}} (x+2n L)e^{-\frac{(x+2n L)^2}{4D_{11}t}}
\end{align*}
obtained from the series expansion, we now show, when $L\to \infty$, that the terms with $n\neq 0$ become negligible, since $e^{-\frac{(x+2n L)^2}{4D_{11}t}}$ decays exponentially with $L$.

\begin{lemma}[Negligibility of the $n\neq0$ terms]
	\label{lem:troncature}
	For fixed $x\in[0,+\infty[$ and $t>0$, set
	$$
	R_L(x,t)=\sum_{n\in\mathbb{Z}\setminus\{0\}} (x+2nL)\,e^{-\frac{(x+2nL)^2}{4D_{11}t}}.
	$$
	Then $R_L(x,t)\xrightarrow[L\to\infty]{}0$. Moreover, there exist $L_0>0$ and a constant $C>0$, independent of $L$, such that
	$$
	|R_L(x,t)|\leq C\,L\,e^{-\frac{L^2}{4D_{11}t}}, \qquad \forall L\geq L_0.
	$$
\end{lemma}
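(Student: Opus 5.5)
Set $\kappa=4D_{11}t>0$ and $q(y)=y\,e^{-y^2/\kappa}$, so that $R_L=\sum_{n\neq0}q(x+2nL)$. Since $x$ and $t$ are fixed, I would take $L_0\geq x$ (and larger if needed) and bound the terms with $n\geq1$ and $n\leq-1$ separately. The idea is that for $L\geq L_0$ every argument satisfies $|x+2nL|\geq(2|n|-1)L\geq L$. The bound then comes from the smallest argument, i.e. $n=-1$, where $|x-2L|=2L-x\geq L$. The remaining terms form a tail that is geometrically smaller.

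\textbf{Step 1 (pointwise bound).} For $n\geq1$ we have $x+2nL\geq 2nL\geq L$. For $n\leq-1$ we have $|x+2nL|=2|n|L-x\geq(2|n|-1)L\geq L$, using $x\leq L$. Moreover $|x+2nL|\leq 2|n|L+x\leq(2|n|+1)L$. Hence, with $m=2|n|-1\geq1$,
$$
|q(x+2nL)|\leq (m+2)L\,e^{-m^2L^2/\kappa}\leq 3mL\,e^{-m^2L^2/\kappa}.
$$
Each odd $m$ occurs for at most two values of $n$, namely $n=\pm\frac{m+1}{2}$. Therefore
$$
|R_L|\leq 6L\sum_{m\geq1}m\,e^{-m^2L^2/\kappa}.
$$

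\textbf{Step 2 (summing the tail).} Since $m^2\geq 1+(m-1)$ for $m\geq1$, we have $e^{-m^2L^2/\kappa}\leq e^{-L^2/\kappa}\,e^{-(m-1)L^2/\kappa}$. Taking additionally $L_0^2\geq\kappa$, each subsequent factor satisfies $e^{-L^2/\kappa}\leq e^{-1}$. This gives
$$
\sum_{m\geq1}m\,e^{-m^2L^2/\kappa}\leq e^{-L^2/\kappa}\sum_{m\geq1}m\,e^{-(m-1)}=:S\,e^{-L^2/\kappa},
$$
where $S<\infty$. Hence $|R_L(x,t)|\leq 6S\,L\,e^{-L^2/(4D_{11}t)}$ for all $L\geq L_0=\max(x,\sqrt{4D_{11}t})$. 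So the stated inequality holds with $C=6S$, which is independent of $L$ (and even of $x$ and $t$).

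\textbf{Step 3 (limit).} The convergence $R_L\to0$ follows immediately, since $L\,e^{-L^2/\kappa}\to0$ as $L\to\infty$. The main point requiring care is Step 1. One must choose $L_0\geq x$ so that the negative-index terms stay uniformly away from zero. Without this, the $n=-1$ term $(x-2L)e^{-(x-2L)^2/\kappa}$ could be close to the maximum of $|q|$. Everything else is a routine geometric comparison.
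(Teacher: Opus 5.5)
Your proof is correct and follows essentially the same route as the paper. Both arguments use the lower bound $|x+2nL|\ge(2|n|-1)L$ for $L\ge x$, reduce to a sum over odd $m$, and use $m^2\ge m$ to extract the factor $e^{-L^2/(4D_{11}t)}$ before summing a geometric-type series. The only differences are two. First, you bound the linear prefactor by $(2|n|+1)L\le 3(2|n|-1)L$ instead of using the paper's monotonicity of $y\,e^{-y^2/(4D_{11}t)}$ on $[\sqrt{2D_{11}t},+\infty[$. Second, you factor out $e^{-L^2/\kappa}$ explicitly, which cleanly justifies a step the paper states loosely: it needs $\sum_{n\ge1}(2n-1)r^{2n-1}\le Kr$, not merely $\le K$, to obtain $2KLr$.
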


\begin{proof}
	For $L>x$ (which holds for $L$ large enough, $x$ being fixed), we have, for all $n\geq 1$,
	$$
	x+2nL\geq 2nL-x\geq (2n-1)L>0,
	$$
	and by symmetry, for all $n\leq -1$,
	$$
	|x+2nL|\geq (2|n|-1)L.
	$$
	The function $h(y)=y\,e^{-\frac{y^2}{4D_{11}t}}$ is decreasing on $[\sqrt{2D_{11}t},+\infty[$. As soon as $L\geq \sqrt{2D_{11}t}$, we thus have, for all $n\neq 0$,
	$$
	\big|(x+2nL)e^{-\frac{(x+2nL)^2}{4D_{11}t}}\big|\leq (2|n|-1)L\, e^{-\frac{((2|n|-1)L)^2}{4D_{11}t}}.
	$$
	Consequently,
	$$
	|R_L(x,t)|\leq 2\sum_{n=1}^{\infty} (2n-1)L\, e^{-\frac{((2n-1)L)^2}{4D_{11}t}}.
	$$
	Set $r=e^{-\frac{L^2}{4D_{11}t}}\in(0,1)$ for $L>0$. The general term of the series satisfies
	$$
	(2n-1)L\, e^{-\frac{((2n-1)L)^2}{4D_{11}t}}=(2n-1)L\,r^{(2n-1)^2}\leq (2n-1)L\,r^{2n-1},
	$$
	since $r\in(0,1)$ and $(2n-1)^2\geq 2n-1$. The series $\sum_{n\geq1}(2n-1)r^{2n-1}$ converges (power series in $r$ with radius of convergence $1$), bounded above by a universal constant $K$ as soon as $r\leq 1/2$, i.e. as soon as $L\geq L_0:=2\sqrt{D_{11}t\ln 2}$. We thus obtain
	$$
	|R_L(x,t)|\leq 2KL\,r=2KL\,e^{-\frac{L^2}{4D_{11}t}}=:C\,L\,e^{-\frac{L^2}{4D_{11}t}}, \qquad \forall L\geq L_0,
	$$
	which tends to $0$ as $L\to\infty$ and gives the announced uniform bound.
\end{proof}

\begin{proposition}[Limit of the Green's function]
	\label{pro:limite_g}
	For fixed $x\in[0,+\infty[$ and $t>0$,
	$$
	g(x,t)\xrightarrow[L\to\infty]{}g_\infty(x,t)=-\frac{x}{2\sqrt{\pi D_{11}t^3}}\,e^{-\frac{(x-v_0t)^2}{4D_{11}t}}.
	$$
	Moreover, for $L\geq L_0$, $g$ is bounded, uniformly in $L$, by a function that is integrable in $t$ on any interval $(0,T]$.
\end{proposition}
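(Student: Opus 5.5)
Starting from the image-sum representation \eqref{forme_g}, we isolate the $n=0$ term and write
\begin{align*}
g(x,t)=-\frac{e^{\alpha x+D_{11}s_0t}}{2\sqrt{\pi D_{11}t^3}}\Big[x\,e^{-\frac{x^2}{4D_{11}t}}+R_L(x,t)\Big],
\end{align*}
with $R_L$ as in Lemma~\ref{lem:troncature}. The prefactor does not depend on $L$. Lemma~\ref{lem:troncature} gives $R_L(x,t)\to0$, so the pointwise limit is the $n=0$ term alone.

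We then simplify the exponent. Since $\alpha=\frac{v_0}{2D_{11}}$ and $D_{11}s_0=-\frac{v_0^2}{4D_{11}}$, completing the square yields
\begin{align*}
\alpha x+D_{11}s_0t-\frac{x^2}{4D_{11}t}=\frac{2v_0xt-v_0^2t^2-x^2}{4D_{11}t}=-\frac{(x-v_0t)^2}{4D_{11}t}.
\end{align*}
This gives exactly $g_\infty(x,t)$ as stated.

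For the uniform bound, we want a majorant valid on the whole interval $(0,T]$, not only at a fixed $t$. We therefore rerun the argument of Lemma~\ref{lem:troncature} with $t$ varying.

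\emph{Step 1 (monotonicity).} For $t\le T$ and $L\ge L_0(T):=\max\big(2x,\ \sqrt{2D_{11}T},\ 2\sqrt{D_{11}T\ln2}\big)$, we have $(2|n|-1)L\ge L\ge\sqrt{2D_{11}t}$. Hence the monotonicity of $h(y)=y\,e^{-y^2/(4D_{11}t)}$ applies. With $r=e^{-L^2/(4D_{11}t)}\le 1/2$, we obtain
\begin{align*}
|R_L(x,t)|\le 2K L\,e^{-\frac{L^2}{4D_{11}t}}.
\end{align*}

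\emph{Step 2 (removing the $L$-dependence).} Using $L\ge 2x$ gives $L-x\ge L/2$, and the image terms are in fact controlled by $\big(|x+2nL|\big)e^{-(L/2)^2/(4D_{11}t)}$-type quantities. It is cleaner to bound $\sup_{L\ge L_0}L\,e^{-L^2/(4D_{11}t)}$. This supremum is attained at the endpoint $L_0$ once $L_0\ge\sqrt{2D_{11}t}$, which holds by the choice of $L_0(T)$. It therefore equals $L_0\,e^{-L_0^2/(4D_{11}t)}$.

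\emph{Step 3 (the majorant).} Combining the $n=0$ term with Step 2, and using $e^{\alpha x+D_{11}s_0t}\le e^{|\alpha|x}$, we get for all $L\ge L_0$
\begin{align*}
|g(x,t)|\le \frac{e^{|\alpha|x}}{2\sqrt{\pi D_{11}t^3}}\Big[x\,e^{-\frac{x^2}{4D_{11}t}}+2KL_0\,e^{-\frac{L_0^2}{4D_{11}t}}\Big].
\end{align*}
The right-hand side is independent of $L$. Each bracketed term with positive spatial argument decays faster than any power of $t$ as $t\to0^+$. It is therefore bounded by $C\,t^{3/2}$ near $0$, so the whole majorant is integrable on $(0,T]$.

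The main obstacle is the case $x=0$. There the $n=0$ term vanishes identically, but the majorant must still handle $t\to0$. It does, because the image contribution carries $e^{-L_0^2/(4D_{11}t)}$ with $L_0>0$, which kills the $t^{-3/2}$ singularity. We must also take care that $L_0$ depends on $T$ (and on $x$), not on $t$. This is why the statement is restricted to a bounded interval $(0,T]$.
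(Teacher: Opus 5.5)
Your proof is correct and follows the paper's route. You isolate the $n=0$ image term, send $R_L\to0$ by the truncation lemma, and complete the square. You then obtain the $L$-independent majorant by evaluating the lemma's thresholds at the endpoint $T$, which is what the paper does in the proof of Corollary~\ref{cor:passage_limite}; there, integrability comes from Lemma~\ref{lem:integrabilite_gauss} instead of your faster-than-any-power remark. Including $2x$ in $L_0(T)$ is a useful detail the paper leaves implicit, and the unused aside about $L-x\ge L/2$ in Step~2 can simply be dropped.
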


\begin{proof}
	From \eqref{forme_g},
	$$
	g(x,t)=-\frac{e^{\alpha x+D_{11}s_0t}}{2\sqrt{\pi D_{11}t^3}}\Big[x\,e^{-\frac{x^2}{4D_{11}t}}+R_L(x,t)\Big].
	$$
	Lemma \ref{lem:troncature} gives $R_L(x,t)\to 0$ as $L\to\infty$, so
	$$
	g(x,t)\xrightarrow[L\to\infty]{}-\frac{e^{\alpha x+D_{11}s_0t}}{2\sqrt{\pi D_{11}t^3}}\,x\,e^{-\frac{x^2}{4D_{11}t}}.
	$$
	Using $\alpha=\frac{v_0}{2D_{11}}$, $s_0=-\frac{v_0^2}{4D_{11}^2}$, and the identity
	$$
	\alpha x+D_{11}s_0t-\frac{x^2}{4D_{11}t}=-\frac{(x-v_0t)^2}{4D_{11}t}
	$$
	(already used to recover \eqref{residu_g} via residues), we obtain the stated formula. The uniform bound follows directly from the one in Lemma \ref{lem:troncature}, applied in the same identity.
\end{proof}

Proposition~\ref{pro:limite_g} only gives \emph{pointwise} convergence $g_L(x,\tau)\to g_\infty(x,\tau)$, for fixed $\tau$. To justify the limit $L\to\infty$ inside the convolution integral defining $c_L(x,t)$, a uniform (in $L$) integrable-in-$\tau$ bound is also needed; we establish it now.

\begin{lemma}[Integrability of the inverse Gaussian kernel]
	\label{lem:integrabilite_gauss}
	For every $a>0$, the function $\tau\mapsto \tau^{-3/2}e^{-a/\tau}$ is integrable on $(0,t]$, and
	$$
	\int_0^t \tau^{-3/2}e^{-a/\tau}\,d\tau \leq \sqrt{\frac{\pi}{a}}.
	$$
\end{lemma}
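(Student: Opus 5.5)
The plan is to evaluate the integral over all of $(0,+\infty)$ exactly by a change of variables, then bound the integral on $(0,t]$ by it. Positivity of the integrand gives the bound directly, and integrability on $(0,t]$ will follow from the same computation.

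\textbf{Step 1 (positivity and reduction).} For $a>0$ and $\tau>0$ the integrand $\tau^{-3/2}e^{-a/\tau}$ is continuous and strictly positive. Its only possible problem on $(0,t]$ is at $\tau\to0^+$. There $e^{-a/\tau}$ decays faster than any power of $\tau$, so the integrand tends to $0$ and in fact extends continuously by $0$ at $\tau=0$. This already gives integrability on $(0,t]$. For the quantitative bound we use
$$
\int_0^t \tau^{-3/2}e^{-a/\tau}\,d\tau\leq \int_0^{+\infty}\tau^{-3/2}e^{-a/\tau}\,d\tau ,
$$
which holds by positivity, as long as the right-hand side is finite.

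\textbf{Step 2 (change of variables).} Substitute $u=\sqrt{a/\tau}$, so that $\tau=a/u^2$ and $d\tau=-2a\,u^{-3}\,du$. The limits change as follows: $\tau\to0^+$ corresponds to $u\to+\infty$, and $\tau\to+\infty$ corresponds to $u\to0^+$. Moreover $\tau^{-3/2}=u^3a^{-3/2}$, so the integrand transforms to
$$
u^3a^{-3/2}e^{-u^2}\cdot 2a\,u^{-3}\,du=\frac{2}{\sqrt a}e^{-u^2}\,du .
$$
Hence
$$
\int_0^{+\infty}\tau^{-3/2}e^{-a/\tau}\,d\tau=\frac{2}{\sqrt a}\int_0^{+\infty}e^{-u^2}du=\frac{2}{\sqrt a}\cdot\frac{\sqrt\pi}{2}=\sqrt{\frac{\pi}{a}} .
$$
This gives finiteness and the stated bound simultaneously.

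The same substitution applied on $(0,t]$ gives the exact value
$$
\int_0^t \tau^{-3/2}e^{-a/\tau}\,d\tau=\frac{2}{\sqrt a}\int_{\sqrt{a/t}}^{+\infty}e^{-u^2}\,du=\sqrt{\frac{\pi}{a}}\,\mathrm{erfc}\!\left(\sqrt{a/t}\right),
$$
which shows the bound is sharp as $t\to\infty$.

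\textbf{Main obstacle.} There is no real obstacle here. The only care points are the orientation of the limits under the decreasing substitution, which accounts for the sign of $d\tau$, and invoking the Gaussian integral $\int_0^\infty e^{-u^2}du=\sqrt\pi/2$. This constant is what will later be combined with the exponent $\frac{(x-v_0t)^2}{4D_{11}t}$, i.e. with $a=\frac{x^2}{4D_{11}}$ after completing the square, to dominate $g_L$ uniformly in $L$.
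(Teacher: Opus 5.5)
Your proof is correct and follows essentially the paper's route: change variables, then bound the integral over $(0,t]$ by the full integral over $(0,+\infty)$, which equals $\sqrt{\pi/a}$. The only difference is cosmetic: you substitute $u=\sqrt{a/\tau}$ and reach the Gaussian integral, while the paper substitutes $u=a/\tau$ and reaches $\Gamma(1/2)$; your exact value $\sqrt{\pi/a}\,\mathrm{erfc}\big(\sqrt{a/t}\big)$ is a small bonus.
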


\begin{proof}
	Set $u=a/\tau$, so that $\tau=a/u$ and $d\tau=-\frac{a}{u^2}du$. As $\tau$ ranges over $(0,t]$, $u$ ranges over $[a/t,+\infty)$. We obtain
    \begin{align*}
        \int_0^t \tau^{-3/2}e^{-a/\tau}d\tau = \int_{a/t}^{\infty} \left(\frac{a}{u}\right)^{-3/2}e^{-u}\frac{a}{u^2}\,du & =a^{-1/2}\int_{a/t}^{\infty} u^{-1/2}e^{-u}\,du\\
        &\leq a^{-1/2}\int_0^\infty u^{-1/2}e^{-u}\,du \\
        &= a^{-1/2}\Gamma\!\left(\tfrac12\right)=\sqrt{\frac{\pi}{a}}<+\infty.
    \end{align*}
	
\end{proof}

\begin{lemma}[Boundedness of $c_p$]
	\label{lem:borne_cp}
	Under Assumption~\ref{H3}, there exists $M_{c_p}>0$ such that $|c_p(s)|\leq M_{c_p}$ for all $s\in[0,t]$.
\end{lemma}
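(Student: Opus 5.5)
The plan is to reduce the claim to the extreme value theorem. Assumption~\ref{H3} states that $c_p\in\mathcal{C}^1(\R_+)$. In particular $c_p$ is continuous on $\R_+$, and hence continuous on the compact interval $[0,t]$ for any fixed $t>0$. By the Weierstrass extreme value theorem, the continuous function $s\mapsto|c_p(s)|$ attains its maximum on $[0,t]$. We then set
$$
M_{c_p}:=\max_{s\in[0,t]}|c_p(s)|+1>0.
$$
Adding $1$ ensures strict positivity even when $c_p\equiv0$ on $[0,t]$. With this choice, $|c_p(s)|\le M_{c_p}$ holds for every $s\in[0,t]$, which is the statement of Lemma~\ref{lem:borne_cp}.

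It is worth noting in the write-up that only continuity is used; the $\mathcal{C}^1$ regularity in Assumption~\ref{H3} is stronger than needed here. One should also record that $M_{c_p}$ depends on $t$ but not on $L$ or $x$. This is the property required when the lemma is combined with Lemma~\ref{lem:integrabilite_gauss} and the uniform bound of Proposition~\ref{pro:limite_g} to dominate $g_L(x,\tau)\big(c_a^0-c_p(t-\tau)\big)$. Since $\tau\in[0,t]$ implies $t-\tau\in[0,t]$, the bound applies to the argument $t-\tau$ appearing in \eqref{solution}.

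There is no real obstacle. The only point needing care is the boundary behaviour of $c_p$ at $s=0$. Continuity on $\R_+=[0,\infty)$ includes the endpoint $0$, so the interval $[0,t]$ is genuinely compact and $c_p$ cannot blow up as $s\to0^+$. If $c_p$ were assumed continuous only on $\R_+^*$, as the boundary condition \eqref{eq:CLinf} alone would suggest, the argument would fail. The proof should therefore explicitly invoke Assumption~\ref{H3} on the closed half-line.
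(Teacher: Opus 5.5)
Your proposal is correct and follows the paper's proof: both use that $c_p\in\mathcal{C}^1(\R_+)$ is continuous on the compact interval $[0,t]$ and apply the Weierstrass extreme value theorem to get boundedness. Adding $1$ to the maximum to make $M_{c_p}$ strictly positive is a harmless refinement that the paper leaves implicit.
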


\begin{proof}
	By Assumption~\ref{H3}, $c_p$ is of class $\mathcal{C}^1(\mathbb{R}^+)$, hence continuous on the compact set $[0,t]$; by the Weierstrass theorem, it is bounded there.
\end{proof}

\begin{corollary}[Uniform domination and limit in the convolution integral]
	\label{cor:passage_limite}
	For fixed $x\in[0,+\infty[$ and $t>0$,
	$$
	\lim_{L\to+\infty} c_L(x,t) = c_\infty(x,t),
	$$
	where $c_L(x,t)=c_a^0+\int_0^t g_L(x,\tau)\big(c_a^0-c_p(t-\tau)\big)d\tau$ and $c_\infty(x,t)=c_a^0+\int_0^t g_\infty(x,\tau)\big(c_a^0-c_p(t-\tau)\big)d\tau$.
\end{corollary}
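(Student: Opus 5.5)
The plan is to apply Lebesgue's dominated convergence theorem to the integrand $\tau\mapsto g_L(x,\tau)\big(c_a^0-c_p(t-\tau)\big)$ on $(0,t]$. Pointwise convergence for each fixed $\tau\in(0,t]$ is exactly Proposition~\ref{pro:limite_g}. The bounded factor $\big(c_a^0-c_p(t-\tau)\big)$ is controlled by Lemma~\ref{lem:borne_cp}: its modulus is at most $|c_a^0|+M_{c_p}$. It therefore remains to find a dominating function $H(\tau)$, independent of $L$ for $L$ large, with $|g_L(x,\tau)|\le H(\tau)$ and $H\in L^1(0,t)$.

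The case $x=0$ is trivial on the classical side: there $g_L(0,\tau)=0=g_\infty(0,\tau)$ for $\tau>0$, by the odd symmetry of the sum in \eqref{forme_g}. So I take $x>0$ fixed. From \eqref{forme_g},
\[
|g_L(x,\tau)|\le \frac{e^{\alpha x+D_{11}s_0\tau}}{2\sqrt{\pi D_{11}\tau^3}}\Big(x\,e^{-\frac{x^2}{4D_{11}\tau}}+|R_L(x,\tau)|\Big),
\]
and $e^{\alpha x+D_{11}s_0\tau}\le e^{|\alpha|x}$ since $s_0\le0$. The first term is of the form $\mathrm{const}\cdot\tau^{-3/2}e^{-a/\tau}$ with $a=x^2/(4D_{11})>0$, which is integrable by Lemma~\ref{lem:integrabilite_gauss}.

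For the remainder I will not use the constant $C$ of Lemma~\ref{lem:troncature} directly, since its threshold $L_0$ depends on $\tau$. Instead I redo its first step uniformly in $\tau\in(0,t]$. For $L\ge \max(x,\sqrt{2D_{11}t})$ we have $L\ge\sqrt{2D_{11}\tau}$ for every such $\tau$, so each term of $R_L$ is bounded by $(2|n|-1)L\,e^{-(2|n|-1)^2L^2/(4D_{11}\tau)}$. Writing $m=2|n|-1$, I bound
\[
m L\, e^{-m^2L^2/(4D_{11}\tau)}\le m L\,e^{-mL^2/(8D_{11}\tau)}\,e^{-L^2/(8D_{11}\tau)}.
\]
For $L\ge L_1:=\max\big(x,\sqrt{2D_{11}t},\sqrt{8D_{11}t\ln 2}\big)$, the ratio $e^{-L^2/(8D_{11}\tau)}\le 1/2$ holds uniformly in $\tau\le t$. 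Hence $\sum_{m\text{ odd}} m\,r^m\le K$ with $r=e^{-L^2/(8D_{11}\tau)}$. This leaves $|R_L(x,\tau)|\le 2K\,L\,e^{-L^2/(8D_{11}\tau)}$.

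The main obstacle is that this bound still carries a factor $L$, so it is not yet independent of $L$. I plan to absorb it using $L\,e^{-L^2/(16D_{11}\tau)}\le \sqrt{8D_{11}\tau/e}\le \sqrt{8D_{11}t}$, which comes from maximizing $y\mapsto y e^{-y^2/b}$. Together with $e^{-L^2/(16D_{11}\tau)}\le e^{-L_1^2/(16D_{11}\tau)}\le e^{-x^2/(16D_{11}\tau)}$, this yields
\[
|R_L(x,\tau)|\le K'\,e^{-\frac{x^2}{16D_{11}\tau}},
\]
with $K'$ independent of $L\ge L_1$ and of $\tau\in(0,t]$. The dominating function is then
\[
H(\tau)=c\,\tau^{-3/2}\Big(e^{-\frac{x^2}{4D_{11}\tau}}+e^{-\frac{x^2}{16D_{11}\tau}}\Big)\big(|c_a^0|+M_{c_p}\big),
\]
which is integrable on $(0,t]$ by Lemma~\ref{lem:integrabilite_gauss}. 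Dominated convergence, applied along any sequence $L_k\to\infty$, gives $\int_0^t g_L(\ldots)\to\int_0^t g_\infty(\ldots)$, and hence $c_L(x,t)\to c_\infty(x,t)$.

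For $x=0$, I would note, consistently with Remark~\ref{rem:boundary_x0}, that both sides equal $c_p(t)$ once the Dirac contribution is included, since $G(0,s)=-1$ for every $L$. Alternatively, the statement can be read for $x>0$ only.
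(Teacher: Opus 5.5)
Your proposal is correct and follows the paper's proof. The shared structure is dominated convergence on $(0,t]$, with $g_L$ split into the $n=0$ Gaussian term and the remainder $R_L$. In both, $R_L$ is bounded uniformly in $\tau\in(0,t]$ for $L\ge L_1(x,t)$, integrability comes from Lemma~\ref{lem:integrabilite_gauss}, $c_p$ is bounded by Lemma~\ref{lem:borne_cp}, and $x=0$ is handled separately as in Remark~\ref{rem:boundary_x0}. The only difference is technical: the paper removes the factor $L$ by implicitly using that $L\mapsto L\,e^{-L^2/(4D_{11}\tau)}$ is decreasing for $L\ge\sqrt{2D_{11}\tau}$, so $L$ can be replaced by $L_1$, whereas you split the exponent and use $\sup_{y}y\,e^{-y^2/b}$, which is equally valid and makes the uniformity in $\tau$ explicit.
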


\begin{proof}
	
\textbf{Case $x=0$.} As explained in Remark~\ref{rem:boundary_x0}, $g_L(0,\cdot)=-\delta_0=g_\infty(0,\cdot)$ as distributions, for every $L>0$ and in the limit $L\to\infty$ alike, since $G_L(0,s)=G_\infty(0,s)=-1$ for every $s$. 
    
	\textbf{Case $x>0$.} From the proof of Lemma~\ref{lem:troncature}, there exist $L_1=L_1(x,t)\geq \max\big(2\sqrt{D_{11}t\ln 2},\,\sqrt{2D_{11}t}\big)$ and $C>0$ such that, for every $L\geq L_1$ and every $\tau\in(0,t]$,
	$$
	|R_L(x,\tau)|\leq C L_1\, e^{-\frac{L_1^2}{4D_{11}\tau}}.
	$$
	Indeed, the threshold $L_0(\tau)=2\sqrt{D_{11}\tau\ln2}$ and the monotonicity threshold $\sqrt{2D_{11}\tau}$ used in the proof of Lemma~\ref{lem:troncature} are both increasing in $\tau$, hence bounded by their values at $\tau=t$; the bound obtained for fixed $\tau$ therefore remains valid uniformly for every $\tau\in(0,t]$ as soon as $L\geq L_1$.
	
	Since $D_{11}s_0\leq 0$ (because $s_0=-v_0^2/(4D_{11}^2)\leq 0$), we have $e^{D_{11}s_0\tau}\leq 1$ for every $\tau\geq0$. We deduce, for every $L\geq L_1$ and every $\tau\in(0,t]$,
	\begin{multline}
	    |g_L(x,\tau)| = \left|\frac{e^{\alpha x+D_{11}s_0\tau}}{2\sqrt{\pi D_{11}\tau^3}}\Big[xe^{-\frac{x^2}{4D_{11}\tau}}+R_L(x,\tau)\Big]\right|
	\leq \\
    \frac{e^{\alpha x}}{2\sqrt{\pi D_{11}}}\,\tau^{-3/2}\Big[xe^{-\frac{x^2}{4D_{11}\tau}}+CL_1e^{-\frac{L_1^2}{4D_{11}\tau}}\Big] =: M(x,\tau).
	\end{multline}

	By Lemma~\ref{lem:integrabilite_gauss}, applied successively with $a=\frac{x^2}{4D_{11}}$ and $a=\frac{L_1^2}{4D_{11}}$, each of the two terms $\tau^{-3/2}e^{-x^2/(4D_{11}\tau)}$ and $\tau^{-3/2}e^{-L_1^2/(4D_{11}\tau)}$ is integrable on $(0,t]$; hence $M(x,\cdot)\in L^1((0,t])$.
	
	Using Lemma~\ref{lem:borne_cp}, we obtain, for every $L\geq L_1$ and every $\tau\in(0,t]$,
	$$
	\Big|g_L(x,\tau)\big(c_a^0-c_p(t-\tau)\big)\Big| \leq \big(|c_a^0|+M_{c_p}\big)\,M(x,\tau) =: \widetilde{M}(\tau),
	$$
	with $\widetilde M\in L^1((0,t])$, independent of $L$.
	
	On the other hand, Proposition~\ref{pro:limite_g} gives $g_L(x,\tau)\to g_\infty(x,\tau)$ for every $\tau\in(0,t]$ as $L\to+\infty$, so
	$$
	g_L(x,\tau)\big(c_a^0-c_p(t-\tau)\big)\xrightarrow[L\to\infty]{}g_\infty(x,\tau)\big(c_a^0-c_p(t-\tau)\big) \qquad \text{for every } \tau\in(0,t].
	$$
	
	The hypotheses of the Lebesgue dominated convergence theorem are thus satisfied. We conclude
	$$
	\lim_{L\to+\infty}\int_0^t g_L(x,\tau)\big(c_a^0-c_p(t-\tau)\big)d\tau = \int_0^t g_\infty(x,\tau)\big(c_a^0-c_p(t-\tau)\big)d\tau,
	$$
	i.e. $\displaystyle\lim_{L\to+\infty}c_L(x,t)=c_\infty(x,t)$.
\end{proof}

Thus only the index-$0$ term contributes significantly, and Corollary~\ref{cor:passage_limite} justifies rigorously the approximation
\begin{align}
	g_{\infty}(x,t)= -\frac{e^{\alpha x+D_{11}s_0t}}{2\sqrt{\pi D_{11}t^3}} xe^{-\frac{x^2}{4D_{11}t}}=-\frac{x}{2\sqrt{\pi D_{11}t^3}} e^{-\frac{(x-v_0t)^2}{4D_{11}t}}
\end{align}
The solution $c(x,t)$ in the semi-infinite case is therefore
\begin{align}
	c_\infty(x,t)=c_a^0-\int_{0}^{t}\frac{x}{2\sqrt{\pi D_{11}\tau^3}} e^{-\frac{(x-v_0\tau)^2}{4D_{11}\tau}}\left(c_a^0-c_p(t-\tau)\right)d\tau
\end{align}
This expression gives the general solution for the semi-infinite case, where the influence of the boundary condition at $x=L$ disappears as $L\to \infty$. The function $g_\infty(x,t)$ is now a Gaussian shifted by the velocity $v_0$, and the solution depends on the boundary condition $c_p(t)$.

If $c_p(t)=c_p^0$ (constant), the integral can be evaluated explicitly in terms of error functions. The general solution then becomes

\begin{align}
	c_\infty(x,t)=c_a^0-\left(c_a^0-c_p^0\right)\int_{0}^{t}\frac{x}{2\sqrt{\pi D_{11}\tau^3}} e^{-\frac{(x-v_0\tau)^2}{4D_{11}\tau}}d\tau
\end{align}
To compute $I(x,t)=\int_0^tg_\infty(x,\tau)d\tau$, we first determine its Laplace transform:
\begin{align*}
	\mathcal{L}\Big[I(x,t)\Big]&=\int_0^\infty e^{-st}\left(\int_{0}^{t}\frac{x}{2\sqrt{\pi D_{11}\tau^3}} e^{-\frac{(x-v_0\tau)^2}{4D_{11}\tau}}d\tau\right)dt, \qquad 0\leq \tau \leq t\leq \infty
\end{align*}
Interchanging the integrals (Fubini's theorem~\cite{rudin1987real}),
\begin{align*}
	\mathcal{L}\Big[I(x,t)\Big]&=\int_{0}^{\infty}\frac{x}{2\sqrt{\pi D_{11}\tau^3}} e^{-\frac{(x-v_0\tau)^2}{4D_{11}\tau}}\left( \int_\tau^\infty e^{-st}dt \right)d\tau, \qquad 0\leq \tau \leq t\leq \infty\\
	&=\frac{x}{2s\sqrt{\pi D_{11}}}\int_{0}^{\infty}\frac{1}{\sqrt{\tau^3}} e^{-s\tau-\frac{(x-v_0\tau)^2}{4D_{11}\tau}}d\tau\\
	&=\frac{xe^{\frac{v_0x}{2D_{11}}}}{2s\sqrt{\pi D_{11}}}\int_{0}^{\infty}\tau^{-\frac{1}{2}-1} e^{-(s-\frac{v_0^2}{4D_{11}})\tau-\frac{x^2}{4D_{11}\tau}}d\tau\\
	&=\frac{xe^{\frac{v_0x}{2D_{11}}}}{2s\sqrt{\pi D_{11}}}\sqrt{\frac{4\pi D_{11}}{x^2}} \exp\left(-x\sqrt{\frac{s}{D_{11}}+\frac{v_0^2}{4D_{11}}}\right)\\
	&=e^{\frac{v_0 x}{2D_{11}}}\frac{1}{s}  \exp\left(-x\sqrt{\frac{s}{D_{11}}+\frac{v_0^2}{4D_{11}}}\right)
\end{align*}
Inverting $\mathcal{L}\Big[I(x,t)\Big]$, we obtain
$$
I(x,t)=\frac{e^{\frac{v_0 x}{2D_{11}}}}{2}\Bigg[e^{\frac{v_0 x}{2D_{11}}}\textbf{erfc}\left(\frac{x+v_0t}{2\sqrt{D_{11}t}}\right)+e^{-\frac{v_0 x}{2D_{11}}}\textbf{erfc}\left(\frac{x-v_0t}{2\sqrt{D_{11}t}}\right)\Bigg],
$$
with \textbf{erfc} the complementary error function,
$$
\textbf{erfc}\left(x\right)=\frac{2}{\sqrt{\pi}}\int_x^\infty e^{-t^2}
$$
Substituting $I(x,t)$ into the expression of $c(x,t)$, we obtain
\begin{equation}
	c_\infty(x,t)=c_a^0-\frac{c_a^0-c_p^0}{2}\Bigg[e^{\frac{v_0 x}{D_{11}}}\textbf{erfc}\left(\frac{x+v_0t}{2\sqrt{D_{11}t}}\right)+\textbf{erfc}\left(\frac{x-v_0t}{2\sqrt{D_{11}t}}\right)\Bigg]
\end{equation}
When $c_a(x)=0$ for all $x>0$, the solution reduces to
\begin{equation}
	\label{agio_banks}
	c_\infty(x,t)=\frac{c_p^0}{2}\Bigg[e^{\frac{v_0 x}{D_{11}}}\textbf{erfc}\left(\frac{x+v_0t}{2\sqrt{D_{11}t}}\right)+\textbf{erfc}\left(\frac{x-v_0t}{2\sqrt{D_{11}t}}\right)\Bigg]
\end{equation}
%\begin{figure}[htb]
%	\centering
%	\includegraphics[width=\linewidth, height=6cm, keepaspectratio]{./concentration_infini.png}
%	\caption{Concentration $c_\infty(x,t)$ in the one-directional half-space $\mathbb{R}_+$.}
%	\label{fig:concentration_infini}
%\end{figure}

	Solution \eqref{solution} extends the original formulation \eqref{agio_banks} for the longitudinal dispersion equation in porous media exactly like in reference~\cite{ogata1961solution}, combining molecular diffusion and convective transport of contaminants in a unidirectional flow. This generalization is consistent with the boundary conditions and reproduces the expected asymptotic behaviour, which supports its use for describing the coupled dispersion-convection process in spatially bounded domains.

\section{Conclusion}
\label{sec_conclusion}

We have presented a unified analytical solution of the 1D convection-diffusion problem at constant velocity, first on a bounded domain and then on the half-space. The Laplace transform gives two equivalent representations of the Green's function: a spectral series (residues) and a spatial series (Poisson summation), whose agreement supports the correctness of the solution. 
The lack of a complete analytical solution in the existing literature for the bounded case necessitated a comparison with the results reported in~\cite{van1982analytical}.The excellent agreement observed between our results and those of~\cite{van1982analytical} provides strong validation of our approach and enables us to draw further conclusions regarding the boundary conditions.
The limit $L\to\infty$, often taken for granted heuristically in the literature, is here derived, recovering in this way the classical Ogata and Banks type solution \cite{ogata1961solution}. A natural extension of this work concerns the case of a spatial and time-dependent wind velocity $V(x,t)$ and its generalization to two or three dimensions.

\medskip

\section*{Conflict of interest}
The authors declare no conflict of interest.

\vspace{5mm}

%\bibliography

\appendix
\section{The function $q$ satisfies the hypotheses of Theorem \ref{theo_somm}}

Provided the hypotheses of Theorem \ref{theo_somm} are satisfied, it allows us to express the solution $g$ as a mixed series. To check these hypotheses, consider the following representation of $g(x,t)$:
\begin{align}
	\label{serie_g_app}
	g(x,t)&=-\frac{e^{\alpha x+D_{11}s_0t}}{2\sqrt{\pi D_{11}t^3}}\sum_{n\in \mathbb{Z}} q(x+2n L),\qquad q(y)=ye^{-\frac{y^2}{4D_{11}t}}
\end{align}
\begin{enumerate}
	\item $q$ is integrable.\\
	Indeed,
	$$
	\int_{\mathbb{R}} |q(y)|dy=2\int_0^{+\infty} ye^{-\frac{y^2}{4D_{11}t}}dy=-4D_{11}t\Big[e^{-\frac{y^2}{4D_{11}t}}\Big]_0^{+\infty}=4D_{11}t<+\infty.
	$$
	Hence $q\in L^1(\mathbb{R})$.
	
	Let $r:=|y|$. We want to bound the function
	$$
	r(1+r)^\alpha e^{-\frac{r^2}{4D_{11}t}}
	$$
	uniformly in $r\geq 0$.
	\begin{itemize}
		\item If $r\leq 1$,
		$$
		r \leq 1 \implies  q(r)\leq 2^\alpha e^{-\frac{r^2}{4D_{11}t}}\leq 2^\alpha.
		$$
		\item If $r\geq 1$,
		\begin{align*}
			r \geq 1 \implies r(1+r)^\alpha\leq 2^\alpha r^{\alpha+1} e^{-\frac{r^2}{4D_{11}t}}
		\end{align*}
		Note that
		$$
		\sup_{r\geq 0} r^{p} e^{-\frac{r^2}{4D_{11}t}}=\left(\frac{2D_{11}tp}{e}\right)^{\frac{p}{2}}.
		$$
		Indeed, $h_p'(r_0)=0\implies r_0=\sqrt{2D_{11}tp}$ with $h_p(r)=r^{p} e^{-\frac{r^2}{4D_{11}t}}$.
		This leads to
		$$
		\sup_{r\geq 0} h_{p+1}(r)=\left(\frac{2D_{11}t(p+1)}{e}\right)^{\frac{p+1}{2}}.
		$$
		Hence, for every $r\geq 0$,
		$$
		q(r)\leq 2^\alpha \max\left\{1, \left(\frac{2D_{11}t(p+1)}{e}\right)^{\frac{p+1}{2}}\right\}=C_{D_{11}}.
		$$
	\end{itemize}
	\item $q$ satisfies the second hypothesis of Theorem \ref{theo_somm}.
	
	Since $q$ is integrable, let $\hat{q}$ denote its Fourier transform. A basic property of the Fourier transform gives
	\begin{align*}
		\hat{q}(\xi)&=i\hat{q}'_1(\xi), \qquad q_1(y)=e^{-\frac{y^2}{4D_{11}t}}\\
		&=i\frac{d}{d\xi} \sqrt{4\pi D_{11}t}e^{-D_{11}t\xi^2}\\
		&=-i(2D_{11}t)\sqrt{4\pi D_{11}t}\xi e^{-D_{11}t\xi^2}\\
		|\hat{q}(\xi)| &\leq (2D_{11}t)\sqrt{4\pi D_{11}t}|\xi|e^{-D_{11}t\xi^2}
	\end{align*}
	In particular, setting $\xi=mw_0$,
	the series
	$$
	\sum_{m=-\infty}^{\infty} \left| \hat{q}(m\omega_{0}) \right| < \infty,
	$$
	\noindent since
	$$
	\sum_{m=-\infty}^{\infty}\left| \hat{q}(m\omega_{0}) \right| \leq (2D_{11}t)\sqrt{4\pi D_{11}t}\sum_{m=-\infty}^{\infty} m|w_0|e^{-D_{11}t(mw_0)^2}
	$$
	\noindent and the series $\sum_{m=-\infty}^{\infty} m|w_0|e^{-s(mw_0)^2}$ converges for every $s>0$.
\end{enumerate}


\begin{thebibliography}{9}

\bibitem{abramowitz1948handbook}
{\small {\sc M. Abramowitz and I.A. Stegun}}: {\it Handbook of mathematical functions with formulas, graphs, and mathematical tables}, vol.~55, US Government printing office, 1948.

\bibitem{augustin2011assessment}
{\small {\sc M. Augustin, A. Caiazzo, A. Fiebach, J. Fuhrmann, V. John, A. Linke and R. Umla}}: {\it An assessment of discretizations for convection‑dominated convection‑diffusion equations}, Comput. Methods Appl. Mech. Engrg., {\bf 200}(47‑48) (2011), 3395--3409.

\bibitem{bermudez1987methode}
{\small {\sc A. Bermúdez and J. Durany}}: {\it La méthode des caractéristiques pour les problemes de convection‑diffusion stationnaires}, ESAIM Math. Model. Numer. Anal., {\bf 21}(1) (1987), 7--26.

\bibitem{deng2014integral}
{\small {\sc B. Deng, J. Li, B. Zhang and N. Li}}: {\it Integral transform solution for solute transport in multi‑layered porous media with the implicit treatment of the interface conditions and arbitrary boundary conditions}, J. Hydrol., {\bf 517} (2014), 566--573.

\bibitem{fahs2014new}
{\small {\sc M. Fahs, A. Younes and T.A. Mara}}: {\it A new benchmark semi‑analytical solution for density‑driven flow in porous media}, Adv. Water Resour., {\bf 70} (2014), 24--35.

\bibitem{katznelson2004introduction}
{\small {\sc Y. Katznelson}}: {\it An introduction to harmonic analysis}, Cambridge University Press, 2004.

\bibitem{kumar2010analytical}
{\small {\sc A. Kumar, D.K. Jaiswal and N. Kumar}}: {\it Analytical solutions to one‑dimensional advection‑diffusion equation with variable coefficients in semi‑infinite media}, J. Hydrol., {\bf 380}(3‑4) (2010), 330--337.

\bibitem{kunasegaran2025analytical}
{\small {\sc P. Kunasegaran and Z.M. Isa}}: {\it Analytical solution of two‑dimensional fractional advection‑diffusion equation with instantaneous source and time‑varying coefficients}, Gulf J. Math., {\bf 21}(1) (2025), 470--485.

\bibitem{linss2003layer}
{\small {\sc T. Linß}}: {\it Layer‑adapted meshes for convection‑diffusion problems}, Comput. Methods Appl. Mech. Engrg., {\bf 192}(9‑10) (2003), 1061--1105.

\bibitem{ogata1961solution}
{\small {\sc A. Ogata and R.B. Banks}}: {\it A solution of the differential equation of longitudinal dispersion in porous media: fluid movement in earth materials}, US Government Printing Office, 1961.

\bibitem{rudin1987real}
{\small {\sc W. Rudin}}: {\it Real and complex analysis}, McGraw-Hill, Inc., 1987.

\bibitem{singh2015scale}
{\small {\sc M.K. Singh and P. Das}}: {\it Scale dependent solute dispersion with linear isotherm in heterogeneous medium}, J. Hydrol., {\bf 520} (2015), 289--299.

\bibitem{stockie2011mathematics}
{\small {\sc J.M. Stockie}}: {\it The mathematics of atmospheric dispersion modeling}, SIAM Rev., {\bf 53}(2) (2011), 349--372.

\bibitem{stynes2005steady}
{\small {\sc M. Stynes}}: {\it Steady‑state convection‑diffusion problems}, Acta Numer., {\bf 14} (2005), 445--508.

\bibitem{szymczak2003boundary}
{\small {\sc P. Szymczak and A.J.C. Ladd}}: {\it Boundary conditions for stochastic solutions of the convection‑diffusion equation}, Phys. Rev. E, {\bf 68}(3) (2003), 036704.

\bibitem{van1982analytical}
{\small {\sc M.Th. van Genuchten and W.J. Alves}}: {\it Analytical solutions of the one-dimensional convective-dispersive solute transport equation}, U.S. Department of Agriculture, Technical Bulletin No.~1661, 1982.

\bibitem{zoppou1999analytical}
{\small {\sc C. Zoppou and J.H. Knight}}: {\it Analytical solution of a spatially variable coefficient advection‑diffusion equation in up to three dimensions}, Appl. Math. Model., {\bf 23}(9) (1999), 667--685.

\end{thebibliography}
\end{document}